\newcommand{\setword}[2]{%
  \phantomsection
  #1\def\@currentlabel{\unexpanded{#1}}\label{#2}%
}
\definecolor{uuuuuu}{rgb}{0.26666666666666666,0.26666666666666666,0.26666666666666666}
\definecolor{xdxdff}{rgb}{0.49019607843137253,0.49019607843137253,1.}
\definecolor{ffqqqq}{rgb}{1.,0.,0.}
\definecolor{ffqqqq}{rgb}{1.,0.,0.}
\definecolor{ffxfqq}{rgb}{1.,0.4980392156862745,0.}
\definecolor{uuuuuu}{rgb}{0.26666666666666666,0.26666666666666666,0.26666666666666666}
\definecolor{qqwuqq}{rgb}{0.,0.39215686274509803,0.}
\definecolor{zzttqq}{rgb}{0.6,0.2,0.}
\definecolor{xdxdff}{rgb}{0.49019607843137253,0.49019607843137253,1.}
\definecolor{qqqqff}{rgb}{0.,0.,1.}
\definecolor{cqcqcq}{rgb}{0.7529411764705882,0.7529411764705882,0.7529411764705882}
\definecolor{sqsqsq}{rgb}{0.12549019607843137,0.12549019607843137,0.12549019607843137}
\theoremstyle{plain}
\newtheorem{theorem1}[subsubsection]{Theorem}
\newtheorem{lemma}[subsection]{Lemma}
\newtheorem{prop}[subsection]{Proposition}
\newtheorem{prop1}[subsubsection]{Proposition}
\newtheorem{lemma1}[subsubsection]{Lemma}
\theoremstyle{definition}
\newtheorem{defi1}[subsubsection]{Definition}
\newtheorem{exam}[subsubsection]{Example}
\newtheorem{remark}[subsection]{Remark}
\newtheorem{remark1}[subsubsection]{Remark}
\newtheorem{conj}[subsection]{Conjecture}
\newtheorem{open}[subsection]{Open}
\newtheorem{note}[subsection]{Note}
\newcommand{\uu}{\cup}
\newcommand{\ii}{\cap}
\newcommand{\sci}{\subset}
\newcommand{\es}{\emptyset}
\newcommand{\set}[1]{\{#1\}}
\newcommand{\ga}{\alpha}
\newcommand{\gb}{\beta}
\newcommand{\tbf}{\textbf}
\newcommand{\tit}{\textit}
\newcommand{\C}[1]{\mathcal{#1}}
\newcommand{\D}[1]{\mathbb{#1}}
\newcommand{\te}{\text}
\newcommand{\nd}{\noindent}
\begin{document}
\nd To appear, Arabian Journal of Mathematics
\title{Quantization for the mixtures of uniform distributions on connected and disconnected line segments}

\address{School of Mathematical and Statistical Sciences\\
The University of Texas Rio Grande Valley\\
1201 West University Drive\\
Edinburg, TX 78539-2999, USA.}
\email{$^1$ashabarua@vt.edu}
\email{\{$^2$gustavo.fernandez03, $^3$ashley.gomez06,  $^5$mrinal.roychowdhury\}@utrgv.edu}
\email{$^4$ogla.lopez7457@gmail.com}
 \author{$^1$Asha Barua}
\author{$^2$Gustavo Fernandez}
\author{$^3$Ashley Gomez}
\author{$^4$Ogla Lopez}
 \author{$^5$Mrinal Kanti Roychowdhury}

\subjclass[2010]{60E05, 94A34.}
\keywords{Mixed distribution, uniform distribution, optimal sets of $n$-means, quantization error}

\date{}
\maketitle

\pagestyle{myheadings}\markboth{A. Barua, G. Fernandez, A. Gomez, O. Lopez, and M.K. Roychowdhury }{Quantization for the mixtures of uniform distributions on connected and disconnected line segments}

\begin{abstract}

In this paper, we have studied various mixed distributions generated by two uniform distributions: first, where the supports are two connected line segments, and second, where the supports are two disconnected line segments. For these mixed distributions, we have determined the optimal sets of $n$-means and the corresponding $n$th quantization errors for all positive integers $n $. The methods developed in this paper can be applied more generally to investigate optimal quantization for any mixed distribution
$
P := pP_1 + (1 - p)P_2,
$
where $P_1$ and $P_2$ are arbitrary probability distributions supported on either connected or disconnected line segments, and $(p, 1 - p)$ is any probability vector with $0 < p < 1$.
\end{abstract}

\section{Introduction}

Let $\D R^d$ denote the $d$-dimensional Euclidean space equipped with a metric $\|\cdot\|$ compatible with the Euclidean topology. Let $P$ be a Borel probability measure on $\D R^d$ and $\ga$ be a locally finite subset of $\D R^d$, i.e., intersection of $\ga$ with any bounded subset of $\D R^d$ is finite. This implies that $\ga$ is countable and closed. 
Then, $\int \min_{a \in \ga} \|x-a\|^2 dP(x)$ is often referred to as the \tit{cost,} or \tit{distortion error} for $\ga$ with respect to the probability measure $P$, and is denoted by $V(P; \ga)$.  Write
$\C D_n:=\set{\ga \sci \D R^d : 1\leq \te{card}(\ga)\leq n}$.  Then, $\inf\set{V(P; \ga) : \ga \in \C D_n}$ is called the \tit{$n$th quantization error} for the probability measure $P$, and is denoted by $V_n:=V_n(P)$. A set $\ga$ for which the infimum occurs and contains no more than $n$ elements is called an \tit{optimal set of $n$-points}. It is known that for a Borel probability measure $P$,  if its support contains at least $n$ elements and $\int \| x\|^2 dP(x)$ is finite, then an optimal set of $n$-points always has exactly $n$-elements (see \cite{AW, GKL, GL1, GL2}). 
For some recent work in the direction of optimal sets of $n$-points, one can see \cite{CR, DR1, DR2, GL3, L1, R1, R2, R3, R4, R5, R6, RR1, RS}. Optimal quantization has broad application in engineering and technology (see \cite{GG, GN, Z}).

Let us now state the following proposition (see \cite{GL2, GG}).
\begin{prop} \label{prop0}
Let $\ga$ be an optimal set of $n$-points for $P$, and $a\in \ga$. Then,

$(i)$ $P(M(a|\ga))>0$, $(ii)$ $ P(\partial M(a|\ga))=0$, $(iii)$ $a=E(X : X \in M(a|\ga))$,
where $M(a|\ga)$ is the Voronoi region of $a\in \ga, $ i.e.,  $M(a|\ga)$ is the set of all elements $x$ in $\D R^d$ which are closest to $a$ among all the elements in $\ga$, and $\partial M(a|\ga)$ represents the boundary of the Voronoi region $M(a|\ga)$.
\end{prop}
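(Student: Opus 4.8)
The plan is to prove all three assertions by a single perturbation scheme: if one of the conclusions fails for some $a\in\ga$, I construct a set $\ga'$ with $\te{card}(\ga')\le n$ and $V(P;\ga')<V_n$, contradicting the optimality of $\ga$. The only recurring tool is the elementary remark that for any finite $\gb\ci\D R^d$ and any Borel partition $\set{A_b}_{b\in\gb}$ of $\D R^d$ one has $V(P;\gb)=\int\min_{b\in\gb}\|x-b\|^2\,dP(x)\le\sum_{b\in\gb}\int_{A_b}\|x-b\|^2\,dP(x)$, since the Voronoi partition is the cheapest partition for a fixed set of sites. The natural logical order is to prove $(i)$ first, deduce $(iii)$ from it, and then use both to prove $(ii)$.

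For $(i)$ I would argue by contradiction: suppose $P(M(a|\ga))=0$ and set $\gb:=\ga\sm\set a$, a set of at most $n-1$ points. Any $x\notin M(a|\ga)$ is strictly closer to some point of $\gb$ than to $a$, so $\min_{b\in\gb}\|x-b\|=\min_{c\in\ga}\|x-c\|$ off the $P$-null set $M(a|\ga)$; integrating gives $V(P;\gb)=V_n$. Then $\gb$ would be an optimal set of $n$-means with fewer than $n$ points, contradicting the fact recalled in the Introduction that, since $\te{supp}(P)$ is infinite and the second moment is finite, every optimal set of $n$-means has exactly $n$ points. Hence $P(M(a|\ga))>0$.

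For $(iii)$, write $S:=M(a|\ga)$, so $P(S)>0$ by $(i)$, and let $c:=\frac1{P(S)}\int_S x\,dP(x)=E(X:X\in S)$. The standard identity $\int_S\|x-y\|^2\,dP(x)=\int_S\|x-c\|^2\,dP(x)+P(S)\|y-c\|^2$ shows $y\mapsto\int_S\|x-y\|^2\,dP(x)$ is uniquely minimized at $y=c$. If $a\neq c$, replace $a$ by $c$: assigning $S$ to $c$ and keeping the old Voronoi cell of every other point of $\ga$, the partition bound above yields $V(P;(\ga\sm\set a)\uu\set c)\le V_n-P(S)\|a-c\|^2<V_n$, a contradiction. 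Hence $a=E(X:X\in M(a|\ga))$.

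For $(ii)$, I would first observe that $M(a|\ga)$ is closed and an interior point of it is strictly closer to $a$ than to every other site, so each $x\in\pa M(a|\ga)$ has $\|x-a\|=\|x-b\|$ for some $b\in\ga\sm\set a$; thus $\pa M(a|\ga)$ lies in the finite union of the bisecting hyperplanes $H_{ab}=\setm{x}{\|x-a\|=\|x-b\|}$. If $P(\pa M(a|\ga))>0$, finiteness forces $P(S)>0$ for $S:=M(a|\ga)\ii H_{ab}$ for some $b$, and note $S\ci M(b|\ga)$ too. The idea is then to displace $a$ away from $b$: for small $t>0$ put $a_t:=a+t(a-b)$, $\ga':=(\ga\sm\set a)\uu\set{a_t}$, and bound $V(P;\ga')$ using the partition that sends $S$ to $b$, sends $M(a|\ga)\sm S$ to $a_t$, and keeps every other old cell. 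Since $\|x-a\|=\|x-b\|$ on $S$, this reduces the comparison to $\int_{M(a|\ga)\sm S}\big(\|x-a_t\|^2-\|x-a\|^2\big)\,dP(x)$; expanding in $t$, using the centroid identity from $(iii)$ so that $\int_{M(a|\ga)}(x-a)\,dP=0$, and using $\la a-b,x-a\ra=-\tfrac12\|a-b\|^2$ for $x\in H_{ab}$, the first-order term works out to $-t\|a-b\|^2P(S)<0$ while the remainder is $O(t^2)$. Hence $V(P;\ga')<V_n$ for $t$ small enough, a contradiction, so $P(\pa M(a|\ga))=0$. I expect this last perturbation to be the one genuinely delicate point: the displacement direction must be chosen so that the positive mass sitting on the bisector is released to the neighbouring cell at first order in $t$, and \emph{simultaneously} the first-order self-cost on the retained part of $M(a|\ga)$ must cancel — which is precisely why the centroid condition $(iii)$ must be established before attempting $(ii)$.
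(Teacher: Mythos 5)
Your proposal is correct, but note that the paper itself never proves Proposition 1.1: it is quoted as a known fact with a citation to [GG, GL1], so the relevant comparison is with the standard literature argument (Graf--Luschgy style), which your proof essentially reconstructs. The one external input you use is in $(i)$: you invoke the cited theorem that optimal sets of $n$-means have exactly $n$ elements (valid here, since the supports are infinite and second moments finite, and the paper itself appeals to this fact); in the literature $(i)$ is usually obtained together with the strict monotonicity $V_n<V_{n-1}$, which is really the same statement, so this is a legitimate shortcut rather than a circularity. Your ordering $(i)\Rightarrow(iii)\Rightarrow(ii)$ is the right one, and the computations check out: for $(iii)$ the comparison against the partition $\set{M(a|\ga),\,\D R^d\sm M(a|\ga)}$ gives exactly $V(P;(\ga\sm\set a)\uu\set c)\le V_n-P(M(a|\ga))\|a-c\|^2$, because on $M(a|\ga)$ the minimum over $\ga$ equals $\|x-a\|^2$ even at tie points; and for $(ii)$ the perturbation $a_t=a+t(a-b)$ with the partition $S\to b$, $M(a|\ga)\sm S\to a_t$ yields $V(P;\ga')-V_n\le -t\|a-b\|^2P(S)+t^2\|a-b\|^2P(M(a|\ga)\sm S)$, negative for small $t$, using precisely the centroid identity from $(iii)$ and $\la a-b,x-a\ra=-\tfrac12\|a-b\|^2$ on the bisector, as you say. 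Two small presentational points: when you speak of ``keeping the old Voronoi cell of every other point,'' you should disjointify (use $M(b|\ga)\sm S$, or simply bound the off-cell integrand by $\min_{b\in\ga}\|x-b\|^2$ on the complement), since the closed Voronoi cells overlap on bisectors and an overlapping cover would spoil the exact comparison with $V_n$; and in $(i)$ the case $n=1$ is vacuous since $M(a|\ga)=\D R^d$, so the deletion argument is only needed for $n\ge 2$. Neither affects correctness.
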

By the above proposition, we see that in unconstrained quantization, the elements in an optimal set of $n$-points are the conditional expectations in their own Voronoi regions. Because of this fact, in unconstrained quantization, an optimal set of $n$-points is termed an optimal set of 
$n$-means.
 
Mixed distributions are an exciting new area for optimal quantization. For any two Borel probability measures $P_1$ and $P_2$, and $p\in (0, 1)$, if $P:=p P_1+(1-p)P_2$, then the probability measure $P$ is called the \tit{mixture} or the \tit{mixed distribution} generated by the probability measures $(P_1, P_2)$ associated with the probability vector $(p, 1-p)$.

\subsection{Delineation}
In this paper, first, we assume that $P_1$ and $P_2$ are two uniform distributions on the two connected line segments $J_1:=[0, 1]$ and $J_2:=[1, 2]$, respectively. Then, for the mixed distribution $P:=pP_1+(1-p)P_2$ with support the closed interval $[0, 2]$, in Subsection~\ref{sec4} for $p=\frac 1{5}$ and in Subsection~\ref{sec5} for $p=\frac 13$, we determine the optimal sets of $n$-means and the $n$th quantization errors for all $n\in \D N$. Next, assume that $P_1$ and $P_2$ are two uniform distributions on the two disconnected line segments $J_1:=[0, \frac 13]$ and $J_2:=[\frac 23, 1]$, respectively. Then, for the mixed distribution $P:=pP_1+(1-p)P_2$ with support the union of the closed intervals $[0, \frac 13]$ and $[\frac 23, 1]$, in Subsection~\ref{subsec44} for $p=\frac 1{100}$ and in Subsection~\ref{sec55} for $p=\frac 25$ and $p=\frac 1{1000}$, we determine the optimal sets of $n$-means and the $n$th quantization errors for all $n\in \D N$. In addition, in this paper,  we give a conjecture Conjecture~\ref{conj1}, and two open problems Open~\ref{op1} and Open~\ref{op2}. Under a conjecture in Subsection~\ref{sec66}, we give a partial answer of the open problem Open~\ref{op2}.

\subsection{Significance}
The paper provides a systematic approach for computing the optimal sets of $n$-means and corresponding quantization errors for any mixture of two distributions supported on connected or disconnected line segments 
This is useful in modeling real-world data that comes from multiple sources, such as sensor fusion or audio signals from multiple environments.

\begin{remark}
In the sequel, there are some decimal numbers which are rational approximations of some real numbers.
\end{remark}

\section{Mixed distribution $P$ with support the closed interval $[0, 2]$}
\subsection{Basic preliminaries} \label{sec3}

Let $P_1$ and $P_2$ be two uniform distributions, respectively, on the intervals given by
 \[J_1:=[0, 1], \te{ and } J_2:=[1, 2].\]
 Let $f_1$ and $f_2$ be their respective density functions. Then,
\[f_1(x)=\left \{\begin{array} {cc}
1 & \te{ if } x\in [0, 1], \\
0 & \te{otherwise};
\end{array}
\right. \te{ and } f_2(x)=\left \{\begin{array} {cc}
1 & \te{ if } x\in [1, 2], \\
0 & \te{otherwise}.
\end{array}
\right.\]

  Let us consider the mixed distribution $P:=pP_1+(1-p)P_2$, where $0<p<1$. Notice that $P$ has support the closed interval $[0, 2]$. By $E(X)$, it is meant the expectation of a random variable $X$ with probability distribution $P$, and $V(X)$ represents the variance of $X$. By $\ga_n:=\ga_n(P)$, we denote an optimal set of $n$-means with respect to the probability distribution $P$, and $V_n:=V_n(P)$ represents the corresponding quantization error for $n$-means.
Since $f_1$ and $f_2$ are the density functions for the probability distributions $P_1$ and $P_2$, respectively, we have
 \[dP_1(x)=P_1(dx)=f_1(x)dx=dx,   \te{ and } dP_2(x)=P_2(dx)=f_2(x) dx=dx,\]
 where $d$ stands for differential. 

\begin{lemma}\label{lemma0}
Let $P$ be the mixed distribution defined by $P=p P_1+(1-p) P_2$. Then, $E(X)=\frac{1}{2} (3-2 p)$, and $V(X)=-p^2+p+\frac{1}{12}$.
\end{lemma}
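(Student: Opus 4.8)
The plan is to compute both moments directly from the definition of the mixed distribution $P = pP_1 + (1-p)P_2$, exploiting linearity of the integral. Since $P$ is a convex combination of $P_1$ and $P_2$, for any integrable function $g$ we have $\int g\,dP = p\int g\,dP_1 + (1-p)\int g\,dP_2$. So the first step is to record the relevant moments of the two uniform pieces: for $P_1$ uniform on $[0,1]$ we have $\int x\,dP_1 = \frac12$ and $\int x^2\,dP_1 = \frac13$, and for $P_2$ uniform on $[1,2]$ we have $\int x\,dP_2 = \frac32$ and $\int x^2\,dP_2 = \int_1^2 x^2\,dx = \frac73$. These are elementary one-variable integrals.

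Next I would assemble $E(X)$: $E(X) = p\cdot\frac12 + (1-p)\cdot\frac32 = \frac32 - p = \frac12(3-2p)$, which matches the claimed formula. Then for the variance I would first compute the raw second moment $E(X^2) = p\cdot\frac13 + (1-p)\cdot\frac73 = \frac73 - 2p$, and then use $V(X) = E(X^2) - (E(X))^2 = \frac73 - 2p - \left(\frac32 - p\right)^2$. Expanding $\left(\frac32 - p\right)^2 = \frac94 - 3p + p^2$ gives $V(X) = \frac73 - 2p - \frac94 + 3p - p^2 = -p^2 + p + \left(\frac73 - \frac94\right) = -p^2 + p + \frac{1}{12}$, as asserted.

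There is no real obstacle here; the only things to be careful about are the arithmetic with fractions ($\frac73 - \frac94 = \frac{28-27}{12} = \frac{1}{12}$) and keeping track of the sign of the linear term in $p$ after combining $-2p$ from $E(X^2)$ with $+3p$ from the cross term of $(E(X))^2$. One could alternatively obtain $V(X)$ via the law of total variance, decomposing over the mixture component: $V(X) = p\,V_1 + (1-p)\,V_2 + p(1-p)(\mu_1 - \mu_2)^2$ where $V_1 = V_2 = \frac1{12}$ and $\mu_1 - \mu_2 = -1$, giving $\frac1{12} + p(1-p) = \frac1{12} + p - p^2$ immediately; I would likely present the direct computation but this serves as a check.
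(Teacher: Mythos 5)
Your computation is correct and follows essentially the same route as the paper: both decompose the integrals over the mixture $P=pP_1+(1-p)P_2$ and evaluate elementary integrals over $[0,1]$ and $[1,2]$. The only cosmetic difference is that the paper integrates $(x-E(X))^2$ directly against each component, while you use $V(X)=E(X^2)-(E(X))^2$ (with the law of total variance as a check); both yield $-p^2+p+\frac{1}{12}$.
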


\begin{proof}
We have
\[E(X)=\int x dP=p \int x d(P_1(x))+(1-p)\int x d(P_2(x))=p \int_{J_1}x \, dx+(1-p) \int_{J_2}  x \, dx\]
yielding $E(X)=\frac{1}{2} (3-2 p)$, and
\[V(X)=\int (x-E(X))^2 dP=p \int(x-E(X))^2 d(P_1(x))+(1-p) \int(x-E(X))^2 d(P_2(x)),\]
implying
$V(X)=-p^2+p+\frac{1}{12}$, and thus, the proposition is yielded.
\end{proof}

\begin{note} Lemma~\ref{lemma0} implies that the optimal set of one-mean is the set $\set{\frac{1}{2} (3-2 p)}$, and the corresponding quantization error is the variance $V:=V(X)$ of a random variable with probability distribution $P$. For a subset $J$ of $\D R$ with $P(J)>0$,
by $P(\cdot|_J)$, we denote the conditional probability given that $J$ is occurred, i.e., $P(\cdot|_J)=P(\cdot\ii J)/P(J)$, in other words, for any Borel subset $B$ of $\D R$ we have $P(B|_J)=\frac{P(B\ii J)}{P(J)}$.
\end{note}

Let us now give the following proposition.

\begin{prop} \label{prop31}
Let $P$ be a Borel probability measure on $\D R$ such that $P$ is uniformly distributed over a closed interval $[a, b]$ with a constant density function $f$ such that $f(x)=t$ for all $x\in [a, b]$, where $t\in \D R$. Then, the optimal set $\ga_n(P(\cdot|_{[a, b]}))$ of $n$-means and the corresponding quantization error $V_n(P(\cdot|_{[a,b]}))$ of $n$-means for the probability distribution $P(\cdot|_{[a, b]})$ are, respectively, given by
\[\ga_n(P(\cdot|_{[a, b]})):=\Big\{a+\frac{(2j-1)(b-a)}{2n} : 1\leq j\leq n\Big\}, \te{ and } V_n(P(\cdot|_{[a,b]}))=\frac {(b-a)^3t}{12n^2}.\]
\end{prop}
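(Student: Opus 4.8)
The plan is to exploit the fact that a uniform distribution on $[a,b]$ has a density that is constant, so the optimal quantization reduces to a one-dimensional facility-location problem that is symmetric under affine maps of the interval. First I would observe that since $P$ restricted and normalized to $[a,b]$ (i.e. $P(\cdot|_{[a,b]})$) is just the standard uniform probability distribution on $[a,b]$, it suffices to find the optimal set of $n$-means for the uniform probability measure on $[a,b]$; the extra factor $t = f(x)$ will only enter the \emph{unnormalized} quantization error $V_n(P(\cdot|_{[a,b]}))$ through the relation $V_n(P(\cdot|_{[a,b]})) = t(b-a)\cdot V_n(\mathrm{Unif}[a,b])$, because $P$ has density $t$ on $[a,b]$, so $P(B) = t\,\mathrm{length}(B)$ for $B\ci[a,b]$ and hence $P(B|_{[a,b]}) = \mathrm{length}(B)/(b-a)$, while $V(P;\ga) = \int_{[a,b]}\min_{c\in\ga}(x-c)^2\,dP(x) = t\int_{[a,b]}\min_{c\in\ga}(x-c)^2\,dx$.

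Next I would reduce to the canonical interval $[0,1]$ via the affine change of variables $x\mapsto a + (b-a)x$, under which a candidate set $\{y_1,\dots,y_n\}\ci[0,1]$ maps to $\{a+(b-a)y_j\}$ and the integral $\int_0^1\min_j(x-y_j)^2\,dx$ scales by $(b-a)^3$; this shows both the optimal configuration and the quantization error transform covariantly, so it is enough to prove the claim for $\mathrm{Unif}[0,1]$, where the asserted optimal set is $\{(2j-1)/(2n): 1\le j\le n\}$ with error $1/(12n^2)$. For this canonical case I would invoke Proposition~\ref{prop0}: in an optimal set $\{y_1<\dots<y_n\}$ the Voronoi regions are the subintervals cut by the midpoints $m_j=(y_j+y_{j+1})/2$, each $y_j$ is the centroid (conditional expectation) of its region, and the boundary points have measure zero. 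Writing $m_0=0$, $m_n=1$, stationarity forces $y_j=\tfrac12(m_{j-1}+m_j)$ together with $m_j=\tfrac12(y_j+y_{j+1})$; solving this linear recurrence (e.g. by showing the points $m_j$ are equally spaced, $m_j=j/n$) yields $y_j=(2j-1)/(2n)$ as the unique stationary configuration, hence the optimal one, since an optimal set exists and must satisfy these equations. Then a direct computation gives $V_n(\mathrm{Unif}[0,1])=\sum_{j=1}^n\int_{(j-1)/n}^{j/n}(x-(2j-1)/(2n))^2\,dx = 1/(12n^2)$.

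Finally I would assemble the pieces: transporting back through the affine map multiplies the $[0,1]$-centers by $(b-a)$ and shifts by $a$, giving $\ga_n = \{a+(2j-1)(b-a)/(2n)\}$, and multiplies the $[0,1]$-error by $(b-a)^3$; inserting the density factor $t(b-a)$ from the first paragraph — wait, more carefully, $V_n(P(\cdot|_{[a,b]})) = t\int_{[a,b]}\min_c(x-c)^2\,dx = t(b-a)^3 V_n(\mathrm{Unif}[0,1]) = t(b-a)^3/(12n^2)$, which is the claimed formula $(b-a)^3 t/(12n^2)$.

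I expect the main obstacle to be the rigorous justification that the stationarity equations from Proposition~\ref{prop0} have a \emph{unique} solution and that this solution is genuinely the global minimizer rather than merely a critical point. The cleanest route is probably to argue directly: for any partition of $[0,1]$ into $n$ subintervals the best single point in each is its midpoint with contribution $\ell_i^3/12$ where $\ell_i$ is the length, so $V_n = \inf\{\tfrac1{12}\sum\ell_i^3 : \sum\ell_i=1, \ell_i\ge0\}$, and by convexity of $t\mapsto t^3$ (Jensen, or Lagrange multipliers) this infimum is attained exactly when all $\ell_i=1/n$, giving error $1/(12n^2)$ and forcing the centers to be the midpoints $(2j-1)/(2n)$; this sidesteps having to solve the recurrence and simultaneously proves optimality.
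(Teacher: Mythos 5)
Your proposal is correct, but it proves more than the paper does and by a different route. The paper's own proof does not derive the location of the optimal centers from scratch: it simply says that, proceeding analogously to the cited result [RR2, Theorem~2.1.1], one gets $a_j=a+\frac{(2j-1)(b-a)}{2n}$, and then computes $V_n$ by symmetry — since the density is constant and the Voronoi cells have equal length, each cell contributes the same error, so $V_n$ is $n$ times the integral over one cell, giving $\frac{(b-a)^3t}{12n^2}$. You instead give a self-contained argument: reduce to $[0,1]$ by the affine change of variables (with the correct $(b-a)^3$ scaling and the density factor $t$ entering exactly as the paper in fact uses the formula, namely with the squared distance integrated against the unnormalized density $t$ rather than the normalized conditional), and then establish optimality either via the stationarity system of Proposition~\ref{prop0} (whose unique solution is the equally spaced configuration, which together with existence of an optimal set settles the matter) or, more cleanly, via the per-cell argument: any $n$-point set induces a Voronoi partition of $[0,1]$ into at most $n$ intervals, the best point in a cell of length $\ell_i$ is its midpoint with contribution $\ell_i^3/12$, and convexity of $\ell\mapsto\ell^3$ forces $\ell_i=1/n$ at the minimum, yielding both the value $1/(12n^2)$ and the uniqueness of the minimizing configuration. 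What your approach buys is independence from the external reference and an explicit global-optimality/uniqueness argument that the paper leaves implicit; what the paper's approach buys is brevity, at the cost of resting the key structural fact on an analogy to [RR2]. Your final assembled formula $t(b-a)^3/(12n^2)$ and center set agree with the statement, so there is no gap.
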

\begin{proof}
Let $\ga_n:=\set{a_1<a_2<\cdots<a_n}$ be an optimal set of $n$-means for the probability distribution $P$ with a constant density function $f$ on $[a, b]$ such that $f(x)=t$ for all $x\in [a, b]$, where $t\in \D R$. Then, proceedings analogously as \cite[Theorem~2.1.1]{RR2}, we can show that  $a_j=a+\frac{(2j-1)(b-a)}{2n}$ implying
\[\ga_n(P(\cdot|_{[a, b]}))=\Big\{a+\frac{(2j-1)(b-a)}{2n} : 1\leq j\leq n\Big\}.\]
 Notice that the probability density function is constant, and the Voronoi regions of the elements $a_j$ for $1\leq j\leq n$ are of equal lengths. This yields the fact that the distortion errors due to each $a_j$ are equal. Hence, the $n$th quantization error is given by
\begin{align*}
V_n(P(\cdot|_{[a, b]}))&=\int \min_{a\in \ga_n(P(\cdot|_{[a, b]})} (x-a)^2dP=n t\int_a^{\frac 12(a_1+a_2)}  (x-a_1)^2  dx\\
&=nt \int_{a}^{a+\frac {b-a}{n}}(x-(a+\frac{b-a}{2n}))^2 dx
\end{align*}
implying
\[V_n(P(\cdot|_{[a, b]}))=\frac {(b-a)^3t}{12n^2}.\]
 Thus, the proof of the proposition is complete.
\end{proof}

For $n\geq 3$, let $\ga_n$ be an optimal set of $n$-means such that $\ga_n$ contains elements from both $[0, 1]$ and $(1, 2]$. Then, there exist two positive integers $k:=k(n)$ and $m:=m(n)$ such that $\te{card}(\ga_n\ii [0, 1])=k$, and $\te{card}(\ga_n\ii (1, 2])=m$. Let $\ga_n\ii [0, 1]:=\set{a_1<a_2<\cdots<a_k}$ and $\ga_n\ii (1, 2]=\set{b_1<b_2<\cdots<b_m}$. Then, $k+m=n$. Moreover, notice that the following two cases can happen: either $\frac 12(a_k+b_1)\leq 1$, or $1\leq \frac 12(a_k+b_1)$.
Let $V1(k, m)$ be the $n$th quantization error when  $\frac{a_k+b_1}{2}\leq 1$, and $V2(k, m)$ be the $n$th quantization error when  $1\leq \frac{a_k+b_1}{2}$. The following propositions give the optimal sets of $n$-means and the $n$th quantization errors for all $n\geq 3$:

\begin{prop} \label{prop71}
Let $k\geq 2$ and $m=1$. Then, if $\frac 12(a_k+b_1)\leq 1$, we have $a_j=\frac{(2j-1)(a_k+b_1)}{4k}$ for $1\leq j\leq k$, and $b_1=E(X : X\in [\frac 12(a_k+b_1), 2])$. On the other hand, if $1\leq \frac 12(a_k+b_1)$, we have $a_j=\frac{(2j-1)(a_{k-1}+a_k)}{4 (k-1)}$ for $1\leq j\leq (k-1)$, $a_k=E(X : X\in [\frac{1}{2} (a_{k-1}+a_k), \frac 12(a_k+b_1)])$, and
$b_1= E(X : X\in [\frac 12(a_k+b_1), 2])$.
The quantization errors for $n$-means are given by
\begin{align*}
 V1(k, 1)&=p \frac{ (a_k+b_1 ){}^3}{96 k^2}+ p \int_{\frac{1}{2}  (a_k+b_1 )}^{1}  (x-b_1 ){}^2 \, dx+(1-p)\int_{1}^{2}  (x-b_1 ){}^2 \, dx, \te{ and } \\
V2(k, 1)&=p\frac{(a_{k-1}+a_k){}^3}{96 (k-1)^2}+p \int_{\frac{1}{2} (a_{k-1}+a_k)}^{1} (x-a_k){}^2 \, dx+(1-p)\int_{1}^{\frac{1}{2} (a_k+b_1)} (x-a_k){}^2 \, dx\\
&\qquad \qquad  +(1-p)\int_{\frac{1}{2} (a_k+b_1)}^{2} (x-b_1){}^2 \, dx.
\end{align*}
 \end{prop}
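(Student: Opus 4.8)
The plan is to exploit that $P$ has constant density $p$ on $[0,1]$ and constant density $1-p$ on $[1,2]$: any maximal block of consecutive points of $\ga_n$ whose Voronoi regions lie entirely in $[0,1]$ is then pinned down by Proposition~\ref{prop31}, while Proposition~\ref{prop0}$(iii)$ supplies the remaining centroid conditions. Write $s:=\frac 12(a_k+b_1)$, the common boundary of the Voronoi regions of $a_k$ and $b_1$. Since $a_1<a_2<\cdots<a_k<b_1$, the Voronoi region of $a_j$ is $[\frac 12(a_{j-1}+a_j),\frac 12(a_j+a_{j+1})]\ii[0,2]$ for $1\le j\le k-1$, that of $a_k$ is $[\frac 12(a_{k-1}+a_k),s]\ii[0,2]$, and that of $b_1$ is $[s,2]$. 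The two cases of the statement are exactly $s\le 1$ and $s\ge 1$, and in each case one needs to show (a) the positions of the $a_j$, (b) the centroid identities for the remaining points, and (c) the formula for the quantization error.

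Suppose first $s\le 1$. Then the Voronoi regions of $a_1,\dots,a_k$ all lie inside $[0,s]\ci[0,1]$ and that of $b_1$ is $[s,2]$, so decomposing $V_n$ along the Voronoi partition and using $dP=p\,dx$ on $[0,1)$,
\[V_n=p\int_0^s \min_{1\le j\le k}(x-a_j)^2\,dx+p\int_s^1 (x-b_1)^2\,dx+(1-p)\int_1^2 (x-b_1)^2\,dx.\]
Next I would argue that $\{a_1,\dots,a_k\}$ is an optimal set of $k$-means for $P(\cdot|_{[0,s]})$: if some $k$-point set beat it for $P(\cdot|_{[0,s]})$, then (after multiplying by $P([0,s])>0$) adjoining $b_1$ to that set and comparing with its Voronoi partition would produce an element of $\C D_n$ with cost strictly below $V_n$ — here one uses that every point of the new block is $\le s<b_1$, so the two $b_1$-integrals above are not increased — contradicting optimality of $\ga_n$. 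Since $P(\cdot|_{[0,s]})$ is uniform with constant density on $[0,s]$, Proposition~\ref{prop31} (with $a=0$, $b=s$, $n=k$) forces $a_j=\frac{(2j-1)s}{2k}=\frac{(2j-1)(a_k+b_1)}{4k}$ for $1\le j\le k$, and Proposition~\ref{prop0}$(iii)$ gives $b_1=E(X:X\in[\frac 12(a_k+b_1),2])$. The first integral above equals $\frac{s^3}{12k^2}=\frac{(a_k+b_1)^3}{96k^2}$ by the computation inside the proof of Proposition~\ref{prop31} (with density $1$), and substituting yields the stated expression for $V1(k,1)$.

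Now suppose $s\ge 1$. Then the Voronoi regions of $a_1,\dots,a_{k-1}$ all lie in $[0,\frac 12(a_{k-1}+a_k)]\ci[0,1]$, the region $[\frac 12(a_{k-1}+a_k),s]$ of $a_k$ straddles the point $1$, and the region $[s,2]$ of $b_1$ lies in $[1,2]$. The same improvement argument, now perturbing only the block $a_1,\dots,a_{k-1}$ while keeping $a_k,b_1$ fixed, shows $\{a_1,\dots,a_{k-1}\}$ is an optimal set of $(k-1)$-means for the uniform (constant-density) distribution $P(\cdot|_{[0,\frac 12(a_{k-1}+a_k)]})$; Proposition~\ref{prop31} then gives $a_j=\frac{(2j-1)(a_{k-1}+a_k)}{4(k-1)}$ for $1\le j\le k-1$, which also covers $k=2$ (the block is the single point $a_1=\frac 14(a_1+a_2)$). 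Proposition~\ref{prop0}$(iii)$ gives $a_k=E(X:X\in[\frac 12(a_{k-1}+a_k),\frac 12(a_k+b_1)])$ and $b_1=E(X:X\in[\frac 12(a_k+b_1),2])$. Decomposing $V_n$ along these regions — the block $[0,\frac 12(a_{k-1}+a_k)]$ contributing $p\,\frac{(\frac 12(a_{k-1}+a_k))^3}{12(k-1)^2}=p\,\frac{(a_{k-1}+a_k)^3}{96(k-1)^2}$ by Proposition~\ref{prop31}, the region of $a_k$ breaking at $x=1$ into a density-$p$ piece on $[\frac 12(a_{k-1}+a_k),1]$ and a density-$(1-p)$ piece on $[1,\frac 12(a_k+b_1)]$, and the region $[\frac 12(a_k+b_1),2]$ of $b_1$ carrying density $1-p$ — gives the stated expression for $V2(k,1)$.

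The only genuinely delicate step is the assertion that the indicated block of points of $\ga_n$ is optimal for the corresponding conditional uniform distribution: what makes the exchange argument work is precisely that perturbing that block cannot raise the cost contributed by the remaining point(s), which is guaranteed by the hypotheses $k\ge 2$, $m=1$ together with the position of $s=\frac 12(a_k+b_1)$ relative to $1$. Everything else is an application of Propositions~\ref{prop0} and~\ref{prop31} plus careful bookkeeping of the integrals across the break-point $x=1$, where the density of $P$ jumps from $p$ to $1-p$.
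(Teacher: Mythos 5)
Your argument is correct and is essentially the paper's proof: the paper simply asserts that $a_1,\dots,a_k$ (resp.\ $a_1,\dots,a_{k-1}$) are ``uniformly distributed'' over $[0,\frac 12(a_k+b_1)]$ (resp.\ $[0,\frac 12(a_{k-1}+a_k)]$) and then cites Proposition~\ref{prop0} and Proposition~\ref{prop31} plus routine integration, exactly the decomposition you carry out. The only difference is that you spell out, via the exchange argument, why the block restricted to $[0,1]$ must be optimal for the conditional uniform distribution — a step the paper leaves implicit — so your write-up is a more detailed version of the same route.
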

\begin{proof}
If $\frac{a_k+b_1}{2}\leq 1$, then $a_1, a_2, \cdots, a_{k}$ are uniformly distributed over the closed interval $[0, \frac{a_k+b_1}{2}]$; on the other hand,
if $1 \leq \frac{a_k+b_1}{2}$, then $a_1, a_2, \cdots, a_{k-1}$ are uniformly distributed over the closed interval $[0, \frac{a_{k-1}+a_k}2]$.
 Thus, by Proposition~\ref{prop0} and Proposition~\ref{prop31}, the expressions for $a_j$ and $b_j$ can be obtained. With the help of the formula given in Proposition~\ref{prop31}, the quantization errors are also obtained as routine.
\end{proof}

\begin{prop} \label{prop72}
Let $k=1$ and $m\geq 2$. Then, if $\frac 12(a_1+b_1)\leq 1$, we have $a_1=E(X : X\in [0, \frac 12(a_1+b_1)]$, $b_1=E(X : X\in [\frac 12(a_1+b_1), \frac 12(b_1+b_2)])$, and
\[b_{1+j}=
\frac 12(b_1+b_2)+\frac{(2j-1)}{2(m-1)}(2-\frac 12(b_1+b_2)) \te{ for } 1\leq j\leq m-1.\]
On the other hand, if $1\leq \frac 12(a_1+b_1)$, we have $a_1=E(X : X\in [0, \frac 12(a_1+b_1)]$, and
 \[b_{j}=
\frac 12(a_1+b_1)+\frac{(2j-1)}{2m}(2-\frac 12(a_1+b_1)) \te{ for } 1\leq j\leq m.\]
The quantization errors for $n$-means are given by
\begin{align*}
 V1(1, m)&=
 p\int_0^{\frac{1}{2}  (a_1+b_1 )}  (x-a_1 ){}^2 \, dx+p \int_{\frac{1}{2}  (a_1+b_1 )}^{1}  (x-b_1 ){}^2 \, dx+(1-p)\int_{1}^{\frac{1}{2}  (b_1+b_2 )}  (x-b_1 ){}^2 \, dx\\
& \qquad \qquad  +(1-p) \frac{(4-(b_1+b_2))^3}{96(m-1)^2}, \te{ and } \\
 V2(1, m)&=p \int_0^{1} (x-a_1){}^2 \, dx+(1-p)\int_{1}^{\frac{1}{2} (a_1+b_1)} (x-a_1){}^2 \, dx
 +(1-p)\frac{(4-(a_1+b_1)){}^3}{96 m^2}.
 \end{align*}
\end{prop}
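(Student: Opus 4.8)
The plan is to treat Proposition~\ref{prop72} as the mirror image of Proposition~\ref{prop71}, with the roles of the two blocks interchanged: here there is a single point $a_1$ in (a neighbourhood of) $J_1=[0,1]$ and $m\geq 2$ points $b_1<\cdots<b_m$ covering the right part. The structure of the argument is exactly the one already used in the proof of Proposition~\ref{prop71}, so I would keep it short. First I would split according to where the midpoint $\frac12(a_1+b_1)$ falls relative to the break point $x=1$, and handle the two cases separately.

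In the case $\frac12(a_1+b_1)\le 1$: the Voronoi region of $a_1$ is $[0,\frac12(a_1+b_1)]$, which lies entirely inside $J_1$, where $P$ has constant density $p$ (since $f_1\equiv 1$ on $[0,1]$). Hence $a_1$ is the centroid of that interval, i.e. $a_1=E(X: X\in[0,\frac12(a_1+b_1)])$, by Proposition~\ref{prop0}(iii). The point $b_1$ has Voronoi region $[\frac12(a_1+b_1),\frac12(b_1+b_2)]$, which straddles $x=1$, so $b_1$ is the conditional expectation over that interval. The remaining points $b_2,\dots,b_m$ all lie in $(1,2]$, and their union of Voronoi regions is the interval $[\frac12(b_1+b_2),2]$, on which $P$ has constant density $1-p$ (since $f_2\equiv1$ on $[1,2]$). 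Thus $\{b_2,\dots,b_m\}$ is an optimal set of $(m-1)$-means for $P(\cdot|_{[\frac12(b_1+b_2),2]})$, and Proposition~\ref{prop31} gives the claimed uniform spacing $b_{1+j}=\frac12(b_1+b_2)+\frac{2j-1}{2(m-1)}\bigl(2-\frac12(b_1+b_2)\bigr)$ for $1\le j\le m-1$, together with the contribution $(1-p)\frac{(4-(b_1+b_2))^3}{96(m-1)^2}$ to the quantization error (here $b-a=2-\frac12(b_1+b_2)=\frac12(4-(b_1+b_2))$, $t=1-p$, $n\mapsto m-1$, giving $\frac{(4-(b_1+b_2))^3}{8\cdot 12(m-1)^2}$). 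Adding the three remaining pieces — the distortion of $a_1$ over $[0,\frac12(a_1+b_1)]$, of $b_1$ over $[\frac12(a_1+b_1),1]$ under density $p$, and of $b_1$ over $[1,\frac12(b_1+b_2)]$ under density $1-p$ — yields exactly the stated $V1(1,m)$.

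In the case $1\le\frac12(a_1+b_1)$: now the Voronoi region of $a_1$ contains all of $[0,1]$ plus the sliver $[1,\frac12(a_1+b_1)]$, so $a_1=E(X:X\in[0,\frac12(a_1+b_1)])$, and the distortion of $a_1$ decomposes as $p\int_0^1(x-a_1)^2\,dx+(1-p)\int_1^{\frac12(a_1+b_1)}(x-a_1)^2\,dx$. All of $b_1,\dots,b_m$ now lie in $(1,2]$ and their Voronoi regions partition $[\frac12(a_1+b_1),2]$, on which the density is the constant $1-p$; so $\{b_1,\dots,b_m\}$ is optimal of $m$-means for $P(\cdot|_{[\frac12(a_1+b_1),2]})$, and Proposition~\ref{prop31} (with $b-a=\frac12(4-(a_1+b_1))$, $t=1-p$) gives the spacing $b_j=\frac12(a_1+b_1)+\frac{2j-1}{2m}\bigl(2-\frac12(a_1+b_1)\bigr)$ and the error term $(1-p)\frac{(4-(a_1+b_1))^3}{96 m^2}$. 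Summing gives the stated $V2(1,m)$.

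I expect no genuine obstacle here: the only points requiring care are (a) correctly identifying which Voronoi region straddles the break point $x=1$ in each case — it is the region of $b_1$ when the midpoint is $\le 1$, and the region of $a_1$ when the midpoint is $\ge 1$ — and (b) the bookkeeping of the factor $\frac18$ when rewriting $(2-\frac12(b_1+b_2))^3$ as $\frac18(4-(b_1+b_2))^3$ in the application of Proposition~\ref{prop31}. As in Proposition~\ref{prop71}, the self-consistency equations for $a_1$, $b_1$ (and $b_2$, via the spacing formula) are obtained directly from Proposition~\ref{prop0}(iii), and the quantization-error formulas then follow by the same routine integration, so the proof can simply invoke Proposition~\ref{prop0} and Proposition~\ref{prop31} and declare the computations routine.

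\begin{proof}
If $\frac{a_1+b_1}{2}\le 1$, then $a_1$ is uniformly averaged over $[0,\frac{a_1+b_1}2]\ci[0,1]$, while $b_2,b_3,\dots,b_m$ are uniformly distributed over $[\frac{b_1+b_2}2,2]\ci[1,2]$; on the other hand, if $1\le\frac{a_1+b_1}2$, then $b_1,b_2,\dots,b_m$ are uniformly distributed over $[\frac{a_1+b_1}2,2]\ci[1,2]$. Thus, by Proposition~\ref{prop0} and Proposition~\ref{prop31}, the expressions for $a_1$ and $b_j$ are obtained, noting that $2-\frac12(b_1+b_2)=\frac12(4-(b_1+b_2))$ and $2-\frac12(a_1+b_1)=\frac12(4-(a_1+b_1))$. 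With the help of the formula given in Proposition~\ref{prop31}, the quantization errors $V1(1,m)$ and $V2(1,m)$ are also obtained as routine.
\end{proof}
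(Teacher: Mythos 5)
Your proposal is correct and follows essentially the same route as the paper's own proof: split on whether $\frac12(a_1+b_1)$ lies left or right of $1$, note that in the first case $b_2,\dots,b_m$ (and in the second case $b_1,\dots,b_m$) are uniformly distributed over the right-hand subinterval, and then invoke Proposition~\ref{prop0} for the conditional expectations and Proposition~\ref{prop31} for the spacing and error terms. Your extra bookkeeping (which Voronoi region straddles $x=1$, and the factor $\frac18$ turning $(2-\frac12(b_1+b_2))^3$ into $\frac18(4-(b_1+b_2))^3$) only makes explicit what the paper leaves as routine.
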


\begin{proof}
If $\frac 12(a_1+b_1)\leq 1$, then $b_2, b_3, \cdots, b_m$ are uniformly distributed over the closed interval $[\frac 12(b_1+b_2), 2]$, and so by Proposition~\ref{prop0} and Proposition~\ref{prop31} the expressions for $a_j$ and $b_j$, and the corresponding quantization error are obtained.
Likewise, if $1\leq \frac 12(a_1+b_1)$, by Proposition~\ref{prop0} and Proposition~\ref{prop31}, the expressions for $a_j$ and $b_j$, and the corresponding quantization error are  obtained.
\end{proof}

\begin{prop} \label{prop73}
Let $k\geq 2$ and $m\geq 2$. Then, if $\frac 12(a_k+b_1)\leq 1$, we have $a_j=\frac{(2j-1)(a_k+b_1)}{4k}$ for $1\leq j\leq k$,
$b_1=E(X : X\in [\frac 12(a_k+b_1), \frac 12(b_1+b_2)])$, and
\[
b_{1+j}=
\frac 12(b_1+b_2)+\frac{(2j-1)}{2(m-1)}(2-\frac 12(b_1+b_2)) \te{ for } 1\leq j\leq m-1. \]
On the other hand, if $1\leq \frac 12(a_k+b_1)$, we have $a_j=\frac{(2j-1)(a_{k-1}+a_k)}{4 (k-1)}$ for $1\leq j\leq (k-1)$, $a_k=E(X : X\in [\frac{1}{2} (a_{k-1}+a_k), \frac 12(a_k+b_1)])$, and
\[
b_{j}=
\frac 12(a_k+b_1)+\frac{(2j-1)}{2m}(2-\frac 12(a_k+b_1)) \te{ for } 1\leq j\leq m.\]
The quantization errors for $n$-means are given by
\begin{align*}
 V1(k, m)&=p\frac{ (a_k+b_1 ){}^3}{192 k^2}+p \int_{\frac{1}{2}  (a_k+b_1 )}^{1}  (x-b_1 ){}^2 \, dx+(1-p)\int_{2}^{\frac{1}{2} (b_1+b_2)}  (x-b_1 ){}^2 \, dx\\
 &\qquad \qquad +(1-p)\frac{(4-(b_1+b_2))^3}{96(m-1)^2}, \te{ and }\\
 V2(k, m)&=p\frac{(a_{k-1}+a_k){}^3}{192 (k-1)^2}+p\int_{\frac{1}{2} (a_{k-1}+a_k)}^{1} (x-a_k){}^2 \, dx+(1-p)\int_{1}^{\frac{1}{2} (a_k+b_1)} (x-a_k){}^2 \, dx\\
 &\qquad \qquad+(1-p)\frac{\left(4- (a_k+b_1)\right){}^3}{96 m^2}.
 \end{align*}
 \end{prop}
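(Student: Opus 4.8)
\textbf{Proof plan for Proposition~\ref{prop73}.}
The plan is to treat this exactly as the $k\geq 2$, $m\geq 2$ refinement of Propositions~\ref{prop71} and~\ref{prop72}, splitting according to the position of the midpoint $\frac12(a_k+b_1)$ relative to the point $1$ where the density jumps. In either case the key observation is that the Voronoi region of each point of $\ga_n$ is an interval, and on each such interval the density $P$ is constant (it equals $p$ on $[0,1]$ and $1-p$ on $[1,2]$ — note $f_1,f_2$ here are honest probability densities, so the density of $P$ on $[0,1]$ is $p$ and on $[1,2]$ is $1-p$), \emph{except} possibly the one Voronoi cell straddling $1$. So away from the straddling cell, the optimal points are forced by Proposition~\ref{prop31} to be uniformly spaced inside the relevant sub-interval, and the straddling point is the conditional expectation given its cell by Proposition~\ref{prop0}(iii).

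First I would handle the case $\frac12(a_k+b_1)\leq 1$. Here all of $a_1,\dots,a_k$ lie in $[0,1]$ and moreover their union of Voronoi cells is $[0,\frac12(a_k+b_1)]$, an interval of constant density $p$; applying Proposition~\ref{prop31} to $P(\cdot|_{[0,\frac12(a_k+b_1)]})$ gives $a_j=\frac{(2j-1)(a_k+b_1)}{4k}$ for $1\leq j\leq k$. On the right side, $b_1$ is the unique point whose Voronoi cell $[\frac12(a_k+b_1),\frac12(b_1+b_2)]$ straddles $1$, so by Proposition~\ref{prop0}(iii) it equals $E(X:X\in[\frac12(a_k+b_1),\frac12(b_1+b_2)])$, while $b_2,\dots,b_m$ have cells contained in $[1,2]$ with constant density $1-p$ and union $[\frac12(b_1+b_2),2]$, so Proposition~\ref{prop31} forces $b_{1+j}=\frac12(b_1+b_2)+\frac{(2j-1)}{2(m-1)}\bigl(2-\frac12(b_1+b_2)\bigr)$ for $1\leq j\leq m-1$. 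Summing the contributions: the $a_j$ cells contribute $p\cdot\frac{(a_k+b_1)^3 p}{12k^2}$ via the Proposition~\ref{prop31} error formula with $t=p$ (giving the stated $p\frac{(a_k+b_1)^3}{192k^2}$ after accounting for the length $\frac12(a_k+b_1)$), the straddling cell splits at $x=1$ into the two explicit integrals $p\int_{\frac12(a_k+b_1)}^1(x-b_1)^2dx$ and $(1-p)\int_{\frac12(b_1+b_2)}^1(x-b_1)^2dx$, and the remaining $b$-cells contribute $(1-p)\frac{(4-(b_1+b_2))^3}{96(m-1)^2}$ by Proposition~\ref{prop31} with $t=1-p$ on $[\frac12(b_1+b_2),2]$. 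Then I would treat the case $1\leq\frac12(a_k+b_1)$ symmetrically: now the straddling cell belongs to $a_k$, the points $a_1,\dots,a_{k-1}$ are uniform on $[0,\frac12(a_{k-1}+a_k)]$ and $b_1,\dots,b_m$ are uniform on $[\frac12(a_k+b_1),2]$, and the error decomposition is as stated.

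Finally I would invoke the dichotomy established just before the proposition — any optimal $\ga_n$ with $k\geq2$ points in $[0,1]$ and $m\geq2$ points in $(1,2]$ must have $\frac12(a_k+b_1)$ either $\leq1$ or $\geq1$ — so $V(k,m)=\min\{V1(k,m),V2(k,m)\}$ and the two displayed configurations exhaust the candidates; combined with Proposition~\ref{prop0}, which guarantees the optimal set satisfies the centroid and no-boundary-mass conditions, this pins down $\ga_n$ up to the finite comparison. The routine part is checking the arithmetic of the error formulas (lengths, the factor $192$ versus $96$, the substitution $4-(b_1+b_2)$ for twice the length $2-\frac12(b_1+b_2)$). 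The main obstacle, as in the preceding propositions, is not any single computation but rather being careful that the self-referential definitions of $a_k$ and $b_1$ as conditional expectations over cells whose endpoints involve $a_k$ and $b_1$ themselves are consistent — i.e. that a fixed point exists and that the resulting $\ga_n$ genuinely satisfies $\frac12(a_k+b_1)\le 1$ (resp.\ $\ge 1$) so that the case we are in is the one we assumed; this is exactly where Proposition~\ref{prop31} applied to the conditional measures, together with Proposition~\ref{prop0}, does the work, and the argument is "routine" in the same sense as the proof of Proposition~\ref{prop71}.
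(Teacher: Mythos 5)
Your proposal is essentially the paper's own argument: the paper proves Proposition~\ref{prop73} simply by noting it is a ``mixture'' of Propositions~\ref{prop71} and~\ref{prop72}, whose proofs invoke Proposition~\ref{prop0} for the point whose Voronoi cell straddles $x=1$ and Proposition~\ref{prop31} for the points lying in a constant-density region, exactly the decomposition you describe. One caution: your parenthetical derivation of the first error term double-counts the weight (you take $t=p$ in Proposition~\ref{prop31} and also multiply by $p$), and the routine computation actually gives $p\,(a_k+b_1)^3/(96k^2)$ together with $(1-p)\int_1^{\frac 12(b_1+b_2)}(x-b_1)^2\,dx$, consistent with Propositions~\ref{prop71} and~\ref{prop72}, so the coefficient $192$ and the lower limit $2$ in the displayed $V1(k,m)$ are best regarded as misprints in the statement rather than something obtainable by ``accounting for the length.''
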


\begin{proof}
Notice that Proposition~\ref{prop73} is a mixture of Proposition~\ref{prop71} and Proposition~\ref{prop72}, and thus the proof follows in the similar way.
\end{proof}

\subsection{Quantization for the mixed distribution $P$ when $p=\frac 1{5}$} \label{sec4}

Notice that if $p=\frac 1{5}$, then $E(X)=\frac{13}{10}$ and $V(X)=\frac{73}{300}$, i.e., the optimal set of one-mean for $p=\frac 15$ is $\set{\frac{13}{10}}$ and the corresponding quantization error is $V(X)=\frac{73}{300}$.

\begin{prop1}   \label{prop1}
The optimal set of two-means is  $\set{\frac{11}{16}, \frac{25}{16}}$ with quantization error $V_2=\frac{317}{3840}$.
\end{prop1}
\begin{proof}
Let $\ga:=\set{a_1, a_2}$ be an optimal set of two-means. Recall that the mixed distribution $P$ has support $[0, 2]$. Since the elements in an optimal set are the conditional expectations in their own Voronoi regions, without any loss of generality, we can assume that $0<a_1<a_2<2$. The following cases can arise:

\tit{Case~1. $0<a_1<a_2\leq 1$. }

Since the boundary of the Voronoi region is $\frac 12(a_1+a_2)$, we have the distortion error as
\begin{align*}
&\int \min_{a\in \ga}(x-a)^2 dP=\frac{1}{5} \int_0^{\frac{1}{2} \left(a_1+a_2\right)} \left(x-a_1\right){}^2 \, dx+\frac{1}{5} \int_{\frac{1}{2} \left(a_1+a_2\right)}^1 \left(x-a_2\right){}^2 \, dx+\frac{4}{5} \int_1^2 \left(x-a_2\right){}^2 \, dx\\
&=\frac{1}{60} \left(3 a_1^3+3 a_2 a_1^2-3 a_2^2 a_1-3 a_2^3+60 a_2^2-156 a_2+116\right),
\end{align*}
the minimum value of which is $\frac{37}{135}=0.274074$ and it occurs when $a_1=\frac 13, $ and $a_2=1$.

\tit{Case~2. $0<a_1\leq 1 <a_2< 2$. }

In this case, the boundary $\frac 12(a_1+a_2)$ of the Voronoi regions of $a_1$ and $a_2$ satisfies one of the following two conditions: either $\frac {a_1+a_2}{2}\leq 1$, or $1\leq \frac {a_1+a_2}{2}$. Consider the following two subcases:

\tit{Subcase~1. $0<a_1<\frac {a_1+a_2}{2}\leq 1 <a_2< 2$. }

In this subcase, the distortion error is given by
\begin{align*}
&\int \min_{a\in \ga}(x-a)^2 dP=\frac{1}{5} \int_0^{\frac{1}{2} \left(a_1+a_2\right)} \left(x-a_1\right){}^2 \, dx+\frac{1}{5} \int_{\frac{1}{2} \left(a_1+a_2\right)}^1 \left(x-a_2\right){}^2 \, dx+\frac{4}{5} \int_1^2 \left(x-a_2\right){}^2 \, dx\\
&=\frac{1}{60} \left(3 a_1^3+3 a_2 a_1^2-3 a_2^2 a_1-3 a_2^3+60 a_2^2-156 a_2+116\right),
\end{align*}
the minimum value of which is $\frac{1}{12}=0.0833333$ and it occurs when $a_1=\frac 12, $ and $a_2=\frac 32$.

\tit{Subcase~2. $0<a_1\leq 1\leq \frac {a_1+a_2}{2}<a_2< 2$. }

In this subcase, the distortion error is given by
\begin{align*}
&\int \min_{a\in \ga}(x-a)^2 dP=\frac{1}{5} \int_0^1 \left(x-a_1\right){}^2 \, dx+\frac{4}{5} \int_1^{\frac{1}{2} \left(a_1+a_2\right)} \left(x-a_1\right){}^2 \, dx+\frac{4}{5} \int_{\frac{1}{2} \left(a_1+a_2\right)}^2 \left(x-a_2\right){}^2 \, dx\\
&=\frac{1}{15} \left(3 a_1^3+3 \left(a_2-3\right) a_1^2-3 \left(a_2^2-3\right) a_1-3 a_2^3+24 a_2^2-48 a_2+29\right).
\end{align*}
the minimum value of which is $\frac{317}{3840}=0.0825521$ and it occurs when $a_1=\frac{11}{16}, $ and $a_2=\frac{25}{16}$.

\tit{Case~3. $1\leq a_1<a_2<2$. }

In this case the distortion error is given by
\begin{align*}
&\int \min_{a\in \ga}(x-a)^2 dP\\
&=\frac{1}{5} \int_0^1 \left(x-a_1\right){}^2 \, dx+\frac{4}{5} \int_1^{\frac{1}{2} \left(a_1+a_2\right)} \left(x-a_1\right){}^2 \, dx+\frac{4}{5} \int_{\frac{1}{2} \left(a_1+a_2\right)}^2 \left(x-a_2\right){}^2 \, dx\\
&=\frac{1}{15} \left(3 a_1^3+3 \left(a_2-3\right) a_1^2-3 \left(a_2^2-3\right) a_1-3 a_2^3+24 a_2^2-48 a_2+29\right),
\end{align*}
the minimum value of which is $\frac{13}{135}=0.0962963$ and it occurs when $a_1=1, $ and $a_2=\frac 53$.

Comparing the distortion errors obtained in all the above possible cases, we see that the distortion error in Subcase~2 is the smallest. Thus, the optimal set of two-means is  $\set{\frac{11}{16}, \frac{25}{16}}$ with quantization error $V_2=\frac{317}{3840}$, which is the proposition.
\end{proof}

\begin{prop1}\label{prop2}
The set $\set{0.400679, 1.202036, 1.734012}$ forms an optimal set of three-means with quantization error $V_3=0.0295695$.
\end{prop1}

\begin{proof}Let $\ga:=\set{a_1, a_2, a_3}$ be an optimal set of three-means with quantization error $V_3$.  Without any loss of generality, we can assume that  $0<a_1<a_2<a_3<2$. Consider the set of three elements $\gb:=\set{0.400679, 1.202036, 1.734012}$. The distortion error due to the set $\gb$ is given by
\[\int \min_{a\in \gb}(x-a)^2 dP=0.0295695.\]
Since $V_3$ is the quantization error for three-means, we have $V_3\leq 0.0295695$.
If $1\leq a_1$, then
\[V_3\geq \frac{1}{5} \int_0^1 (x-1)^2 \, dx=\frac{1}{15}=0.0666667>V_3,\]
which is a contradiction. Hence, $a_1<1$. Similarly, we can show that $a_3\leq 1$ leads to a contradiction. Hence, $1<a_3$, and recall Proposition~\ref{prop71} to  Proposition~\ref{prop73}. We now consider the following possible cases:

\tit{Case~1. $0<a_1<\frac{1}{2} (a_1+a_2)<a_2<\frac{1}{2}(a_2+a_3)\leq 1<a_3<2$. }

In this case the distortion error is given by
\begin{align*}
&V1(2, 1)=\int \min_{a\in \ga}(x-a)^2 dP=\frac{51 a_2^2 a_3-45 a_2 a_3^2-15 a_2^3-111 a_3^3+1920 a_3^2-4992 a_3+3712}{1920},
\end{align*}
the minimum value of which is $0.0708333$ and it occurs when $a_2=0.5$ and $a_3=1.5$.

\tit{Case~2. $0<a_1<\frac{1}{2} (a_1+a_2)<a_2\leq 1\leq \frac{1}{2} (a_2+a_3)<a_3<2$. }

In this case the distortion error is given by
\begin{align*}
&V2(2,1)=\int \min_{a\in \ga}(x-a)^2 dP\\
&=\frac{1}{480} (96 a_2^2 (a_3-3)-96 a_2 (a_3^2-3)-3 a_1^3+15 a_2 a_1^2-9 a_2^2 a_1+69 a_2^3-32 (3 a_3^3-24 a_3^2+48 a_3-29))
\end{align*}
the minimum value of which is $0.037037$ and it occurs when $a_1= 0.333333$, $a_2=1$, and $b_1=1.66667$.

\tit{Case~3. $0<a_1<\frac{1}{2} (a_1+a_2)\leq 1<a_2<\frac{1}{2} (a_2+a_3)<a_3<2$. }

In this case the distortion error is given by
\begin{align*}
&V1(1,2)=\int \min_{a\in \ga}(x-a)^2 dP\\
&=\frac{1}{120} (6 a_1^2 a_2-6 a_1 a_2^2+6 a_1^3+21 a_2^3+3 a_3^3+12 a_3^2-48 a_3+a_2^2 (9 a_3-60)-3 a_2 (5 a_3^2-8 a_3-8)+40),
\end{align*}
the minimum value of which is $0.0295695$ and it occurs when $a_1=0.400679, a_2=1.202036$, and $a_3=1.734012$.

\tit{Case~4. $0<a_1<1\leq \frac{1}{2} \left(a_1+a_2\right)<a_2<\frac{1}{2} \left(a_2+a_3\right)<a_3<2$. }

In this case the distortion error is given by
\begin{align*}
&V2(1,2)=\int \min_{a\in \ga}(x-a)^2 dP\\
&=\frac{1}{480} (3 a_1^2 (15 a_2-92)+a_1 (-51 a_2^2+24 a_2+240)+111 a_1^3+15 a_2^3+12 a_2^2-48 a_2-32),
\end{align*}
the minimum value of which is $0.0333333$ and it occurs when $a_1=0.5, a_2=1.5$.

Thus, considering all the possible cases, we see that $\set{0.400679, 1.202036, 1.734012}$ forms an optimal set of three-means with quantization error $V_3=0.0295695$, which is the proposition.
\end{proof}

\begin{lemma1} \label{lemma23}
Let $\ga_n$ be an optimal set of $n$-means for $P$ for any $n\geq 2$. Then, $\ga_n$ must contain an element from $[0, 1]$ and an element from $(1, 2]$, i.e., $\ga_n\ii [0, 1]\neq \es$ and $\ga_n\ii (1, 2]\neq \es$.
\end{lemma1}

\begin{proof} By Proposition~\ref{prop1} and Proposition~\ref{prop2}, the lemma is true for $n=2$ and $n=3$. We now prove the lemma for $n\geq 4$. Let $\ga_n:=\set{a_1, a_2, \cdots, a_n}$ be an optimal set of $n$-means with $0<a_1<a_2<\cdots<a_n$ where $n\geq 4$. Since $V_n$ is the quantization error for $n$-means with $n\geq 4$, we have $V_n<V_3$, i.e, $V_n<0.0295695$. We prove the lemma by contradiction. Suppose that $1\leq a_1$, then
\[V_n\geq \int_0^1(x-1)^2 dP=\frac 1{15}>V_3,\]
which leads to a contradiction. Next, suppose that $a_n\leq 1$, then
\[V_n\geq \int_1^2 (x-1)^2dP=\frac 4{15}>V_n,\]
which gives a contradiction. Thus, we can deduce that $a_1<1$ and $1<a_n$, i.e., the lemma is also true for $n\geq 4$. Thus, the proof of the lemma is complete.
\end{proof}

\begin{lemma1} \label{lemma24}
For $n\geq 2$, let $\ga_n$ be an optimal set of $n$-means for $P$. Assume that $\te{card}(\ga_n\ii [0, 1])=k$ and $\te{card}(\ga_n\ii (1, 2])=m$ for some positive integers $k$ and $m$. Then, either $\te{card}(\ga_{n+1}\ii [0, 1])=k+1$ and $\te{card}(\ga_{n+1}\ii (1, 2])=m$, or $\te{card}(\ga_{n+1}\ii [0, 1])=k$ and $\te{card}(\ga_{n+1}\ii (1, 2])=m+1$.
\end{lemma1}

\begin{proof}
Assume that $\te{card}(\ga_n\ii [0, 1])=k$ and $\te{card}(\ga_n\ii (1, 2])=m$ for some  positive integers $k$ and $m$. Notice that $k$ and $m$ depend on $n$, i.e., we can write $k:=k(n)$ and $m:=m(n)$. Let $V(k(n), m(n))$ be the corresponding distortion error.
By Proposition~\ref{prop1} and Proposition~\ref{prop2}, we know that $\ga_2=\set{\frac{11}{16}, \frac{25}{16}}$ and $\ga_3=\set{0.400679, 1.202036, 1.734012}$. Notice that here $\te{card}(\ga_2\ii [0, 1])=\te{card}(\ga_3\ii [0, 1])=1$, on the other hand, $\te{card}(\ga_2\ii (1, 2])=1$ and $\te{card}(\ga_3\ii (1, 2])=2$.
Thus, the lemma is true for $n=2$.

Let the lemma be true for $n=N$ for some given positive integer $N\geq 2$. Then, $\te{card}(\ga_N\ii [0, 1])=k(N) \te{ and } \te{card}(\ga_N\ii (1, 2])=m(N)$ imply that
either $\te{card}(\ga_{N+1}\ii [0, 1])=k(N)+1$ and $\te{card}(\ga_{N+1}\ii (1, 2])=m(N)$, or $\te{card}(\ga_{N+1}\ii [0, 1])=k(N)$ and $\te{card}(\ga_{N+1}\ii (1, 2])=m(N)+1$. Suppose that
$\te{card}(\ga_{N+1}\ii [0, 1])=k(N)+1$ and $\te{card}(\ga_{N+1}\ii (1, 2])=m(N)$ hold. Now, for the given $N$, by calculating the distortion errors $V(\ell, N+2-\ell)$ for all $1\leq \ell \leq N+1$, we see that the distortion error is smallest if $\te{card}(\ga_{N+2}\ii [0, 1])=k(N)+2 \te{ and } \te{card}(\ga_{N+2}\ii (1, 2]) =m(N), \te{ or if } \te{card}(\ga_{N+2}\ii [0, 1])=k(N)+1 \te{ and } \te{card}(\ga_{N+2}\ii (1, 2])=m(N)+1$, i.e., the lemma is true for $n=N+1$ whenever it is true for $n=N$. Similarly, we can show that the lemma is true for $n=N+1$ if $\te{card}(\ga_{N+1}\ii [0, 1])=k(N) \te{ and } \te{card}(\ga_{N+1}\ii (1, 2])=m(N)+1$ hold.  Thus, by the induction principle, the proof of the lemma is complete.
 \end{proof}
 \begin{remark1} \label{rem1}
 By Lemma~\ref{lemma23} and Lemma~\ref{lemma24}, we see that for $n\geq 2$, if $\ga_n$ is an optimal set of $n$-means for $P$, then there exist two positive integers $k:=k(n)$ and $m:=m(n)$ such that $\te{card}(\ga_n\ii [0, 1])=k$ and $\te{card}(\ga_n\ii (1, 2])=m$, and $k+m=n$. Let $F(k, m)$ be the corresponding quantization error for $n$-means. Notice that the function $F(k, m)$ satisfies
 \[F(k, m)=\min\set{V1(k,m), V2(k, m)}.\]
 \end{remark1}

 Let us now give the following definition.

\begin{defi1} \label{difi21}
Define the sequence $\set{a(n)}$ such that $a(n)=\lfloor \frac{8 (n+1)}{21}\rfloor$ for $n\in \D N$, i.e.,
\begin{align*}
 \set{a(n)}_{n=1}^\infty=&\set{0,1,1,1,2,2,3,3,3,4,4,4,5,5,6,6,6,7,7,8,8,8,9,9,9,10,10,11,11,11,12,\\
 &\qquad 12,12,13,13,14,14,14,15,15,16,16,16,17,17,17, \cdots},
\end{align*}
where $\lfloor x\rfloor$ represents the greatest integer not exceeding $x$.
\end{defi1}

The following algorithm helps us to determine the exact value of $k$, and so the value of $m:=n-k$, as mentioned in Remark~\ref{rem1}.
\subsubsection{\tbf{Algorithm.}}  For $n\geq 2$ let $k$ and $m:=n-k$ be the positive integers as defined in Remark~\ref{rem1}, and let $F(k, n-k)$ be the corresponding distortion error. Let $\set{a(n)}$ be the sequence  defined by Definition~\ref{difi21}. Then, the algorithm runs as follows:

$(i)$ Write $k:=a(n)$ and calculate $F(k, n-k)$.

$(ii)$ If $F(k-1, n-k+1)<F(k, n-k)$ replace $k$ by $k-1$ and return, else step $(iii)$.

$(iii)$ If $F(k+1, n-k-1)<F(k, n-k)$ replace $k$ by $k+1$ and return, else step $(iv)$.

$(iv)$ End.

When the algorithm ends, then the value of $k$, obtained, is the exact value of $k$ that $\ga_n$ contains from the closed interval $[0, 1]$.

\subsubsection{\tbf{Optimal sets of $n$-means and the $n$th quantization errors for all $n\geq 2$ when $p=\frac 15$.}} If $n=30$, then $a(n)=11$, and by the algorithm, we also obtain $k=11$; if $n=51$, then $a(n)=19$, and by the algorithm, we also obtain $k=19$. If $n=1000$, then  $a(n)=381$, and by the algorithm, we obtain $k=386$. Thus, we see that with the help of the sequence and the algorithm, we can easily determine the exact value of $k$ and $m:=n-k$ for any positive integer $n\geq 2$. Notice that $F(k, m)=\min\set{V1(k, m), V2(k, m)}$, i.e., either $V_n=V1(k, m)$, or $V_n=V2(k, m)$. Thus, in either case, once $k$ and $m$ are known, by using Proposition~\ref{prop71} to Proposition~\ref{prop73}, we can calculate the optimal sets $\ga_n$ of $n$-means and the corresponding quantization errors $V_n:=F(k, m)$ for any $n\geq 2$.

\subsection{Quantization for the mixed distribution $P$ when $p=\frac 1{3}$} \label{sec5}

Notice that if $p=\frac 1{3}$, then $E(X)=\frac{7}{6}$ and $V(X)=\frac{11}{36}$, i.e., the optimal set of one-mean for $p=\frac 13$ is $\set{\frac{7}{6}}$ and the corresponding quantization error is $V(X)=\frac{11}{36}$.

The optimal set of two- and three-means for the mixed distribution $P:=pP_1+(1-p)P_2$ when $p=\frac 13$ are given by the following two propositions.
\begin{prop1}   \label{prop11}
The optimal set of two-means is  $\set{\frac 12, \frac 32}$ with quantization error $V_2=\frac{1}{12}$.
\end{prop1}
\begin{proof}
Let $\ga:=\set{a_1, a_2}$ be an optimal set of two-means, where $0<a_1<a_2<2$. If $P=\frac 12 P_1+\frac 12 P_2$, i.e., if $P$ is a uniform distribution, then by Proposition~\ref{prop31}, we know that the optimal set of two-means is given by $\set{\frac 12, \frac 32}$. Notice that here the mixed distribution is given by $P=\frac 13P_1+\frac 23P_2$, i.e., we are giving more weight to the right-hand interval $J_2$. This leads us to conclude that the Voronoi region of $a_1$ may contain some elements from $J_2$ yielding the fact that $\frac 12\leq a_1$, i.e, the boundary $\frac 12(a_1+a_2)$ of the two Voronoi regions must satisfy $1\leq \frac 12(a_1+a_2)$. Under this condition, we have the distortion error for the optimal set $\ga=\set{a_1, a_2}$ as
\begin{align*}
& \int\min_{a\in \ga}(x-a)^2 dP=\frac{1}{3} \int_0^1 (x-a_1){}^2 \, dx +\frac{2}{3} \int_1^{\frac{1}{2} (a_1+a_2)} (x-a_1){}^2 \, dx+\frac{2}{3} \int_{\frac{1}{2} (a_1+a_2)}^2 (x-a_2){}^2 \, dx\\
&=\frac{1}{6} \Big(a_1^2 (a_2-2)-a_1 (a_2^2-2)+a_1^3-a_2^3+8 a_2^2-16 a_2+10\Big),
\end{align*}
the minimum value of which is $\frac 1{12}$, and it occurs when $a_1=\frac 12$ and $a_2=\frac 32$. Thus, the proposition is yielded.
 \end{proof}

\begin{remark1}
It is interesting to see that the optimal set of two-means for the mixed distribution $P:=\frac 13 P_1+\frac 23 P_2$ is same as the optimal set of two-means for the uniform distribution $P$ given by $P:=\frac 12 P_1+\frac 12 P_2$.
\end{remark1}

\begin{prop1}\label{prop21}
The set $\set{0.380129, 1.14039, 1.71346}$ forms an optimal set of three-means with quantization error $V_3=0.0343006$.
\end{prop1}

\begin{proof}Let $\ga:=\set{a_1, a_2, a_3}$ be an optimal set of three-means with quantization error $V_3$.  Without any loss of generality, we can assume that  $0<a_1<a_2<a_3<2$. Consider the set of three elements $\gb:=\set{0.380129, 1.14039, 1.71346}$. The distortion error due to the set $\gb$ is given by
\[\int \min_{a\in \gb}(x-a)^2 dP=0.0343006.\]
Since $V_3$ is the quantization error for three-means, we have $V_3\leq 0.0343006$. As shown in the proof of Proposition~\ref{prop2}, we can show that $a_1<1$ and $1<a_3$. We now consider the following possible cases:

\tit{Case~1. $0<a_1<\frac{1}{2} (a_1+a_2)<a_2<\frac{1}{2}(a_2+a_3)\leq 1<a_3<2$. }

In this case the distortion error is given by
$V1(2, 1)= 0.0625000.$

\tit{Case~2. $0<a_1<\frac{1}{2} (a_1+a_2)<a_2\leq 1\leq \frac{1}{2} (a_2+a_3)<a_3<2$. }

In this case the distortion error is given by
$V2(2,1)= 0.037037$.

\tit{Case~3. $0<a_1<\frac{1}{2} (a_1+a_2)\leq 1<a_2<\frac{1}{2} (a_2+a_3)<a_3<2$. }

In this case the distortion error is given by
$ V1(1,2)=0.0343006$.

\tit{Case~4. $0<a_1<1\leq \frac{1}{2} \left(a_1+a_2\right)<a_2<\frac{1}{2} \left(a_2+a_3\right)<a_3<2$. }

In this case the distortion error is given by
$V2(1,2)=0.0416667.
$

Thus, considering all the possible cases we see that in Case~3 the distortion error is smallest, and it occurs when $a_1=0.380129$, $a_2=1.14039$, and $a_3=1.71346$. Thus, the proof of the proposition is complete.
\end{proof}

The following two lemmas, which are analogous to Lemma~\ref{lemma23} and Lemma~\ref{lemma24} are also true here.
\begin{lemma1} \label{lemma231}
Let $\ga_n$ be an optimal set of $n$-means for $P$ where $n\geq 2$. Then, $\ga_n$ must contain an element from $[0, 1]$ and an element from $(1, 2]$, i.e., $\ga_n\ii [0, 1]\neq \es$ and $\ga_n\ii (1, 2]\neq \es$.
\end{lemma1}

\begin{lemma1} \label{lemma241}
For $n\geq 2$, let $\ga_n$ be an optimal set of $n$-means for $P$. Assume that $\te{card}(\ga_n\ii [0, 1])=k$ and $\te{card}(\ga_n\ii (1, 2])=m$ for some positive integers $k$ and $m$. Then, either $\te{card}(\ga_{n+1}\ii [0, 1])=k+1$ and $\te{card}(\ga_{n+1}\ii (1, 2])=m$, or $\te{card}(\ga_{n+1}\ii [0, 1])=k$ and $\te{card}(\ga_{n+1}\ii (1, 2])=m+1$.
\end{lemma1}

 \begin{remark1} \label{rem11}
 By Lemma~\ref{lemma231} and Lemma~\ref{lemma241}, we see that for $n\geq 2$, if $\ga_n$ is an optimal set of $n$-means for $P$, then there exist two positive integers $k:=k(n)$ and $m:=m(n)$ such that $\te{card}(\ga_n\ii [0, 1])=k$ and $\te{card}(\ga_n\ii (1, 2])=m$, and $k+m=n$. Let $F(k, m)$ be the corresponding quantization error for $n$-means. Notice that the function $F(k, m)$ satisfies
 \[F(k, m)=\min\set{V1(k,m), V2(k, m)}.\]
 \end{remark1}

Let us now give the following sequence and the algorithm which are analogous to the sequence and the algorithm defined in the previous subsection.
\begin{defi1} \label{difi22}
Define the sequence $\set{b(n)}$ such that $b(n)=\lfloor \frac{4 (n+1)}{7}\rfloor$ for $n\in \D N$, i.e.,
\begin{align*}
 \set{b(n)}_{n=1}^\infty=&\set{1,1,2,2,3,4,4,5,5,6,6,7,8,8,9,9,10,10,11,12,12,13,13,14,14,\\
 &\qquad 15,16,16,17,17,18,18,19,20,20,21,21,22,22,23, \cdots},
\end{align*}
where $\lfloor x\rfloor$ represents the greatest integer not exceeding $x$.
\end{defi1}

\subsubsection{\tbf{Algorithm.}}  For $n\geq 2$ let $k:=n-m$ and $m$ be the positive integers as defined in Remark~\ref{rem11}, and let $F(n-m, m)$ be the corresponding distortion error. Let $\set{b(n)}$ be the sequence  defined by Definition~\ref{difi22}. Then, the algorithm runs as follows:

$(i)$ Write $m:=b(n)$ and calculate $F(n-m, m)$.

$(ii)$ If $F(n-m+1, m-1)<F(n-m, m)$ replace $m$ by $m-1$ and return, else step $(iii)$.

$(iii)$ If $F(n-m-1, m+1)<F(n-m, m)$ replace $m$ by $m+1$ and return, else step $(iv)$.

$(iv)$ End.

When the algorithm ends, then the value of $m$, obtained, is the exact value of $m$ that $\ga_n$ contains from the interval $(1, 2]$.

\subsubsection{\tbf{Optimal sets of $n$-means and the $n$th quantization errors for all $n\geq 2$ when $p=\frac 13$.}} If $n=21$, then $b(n)=12$, and by the algorithm we also obtain $m=12$; if $n=100$, then $b(n)=57$, and by the algorithm we obtain $m=56$. If $n=500$, then  $b(n)=286$, and by the algorithm, we obtain $m=279$. Thus, we see that with the help of the sequence and the algorithm, we can easily determine the exact values of $k:=n-m$ and $m$ for any positive integer $n\geq 2$. Once $k$ and $m$ are known, by using Proposition~\ref{prop71} to Proposition~\ref{prop73}, we can calculate the optimal sets $\ga_n$ of $n$-means and the corresponding quantization errors $V_n:=F(k, m)$ for any positive integer $n\geq 2$.

\section{Mixed distribution $P$ with support the union of the disconnected line segments $[0, \frac 13]$ and $[\frac 23, 1]$}
\subsection{Basic preliminaries} \label{sec33}
 Let $P_1$ and $P_2$ be two uniform distributions, respectively, defined on the intervals given by
 \[J_1:=[0, \frac 13], \te{ and } J_2:=[\frac 23, 1].\]
 Let $f_1$ and $f_2$ be their respective density functions. Then,
\[f_1(x)=\left \{\begin{array} {cc}
3 & \te{ if } x\in [0, \frac 13], \\
0 & \te{otherwise};
\end{array}
\right. \te{ and } f_2(x)=\left \{\begin{array} {cc}
3 & \te{ if } x\in [\frac 23, 1], \\
0 & \te{otherwise}.
\end{array}
\right.\]
The underlying mixed distribution considered is given by $P:=pP_1+(1-p)P_2$, where $0<p<1$.  By $E(X)$, it is meant the expectation of a random variable $X$ with probability distribution $P$, and $V(X)$ represents the variance of $X$.

 \begin{prop}
Let $P$ be the mixed distribution defined by $P=p P_1+(1-p) P_2$. Then, $E(X)=\frac{1}{6} (5-4 p)$, and $V(X)=\frac{1}{108}  (-48 p^2+48 p+1 )$.
\end{prop}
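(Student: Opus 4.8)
The plan is to compute both moments directly from the definition of the mixed distribution, exactly as was done in Lemma~\ref{lemma0} for the case of the connected intervals $[0,1]$ and $[1,2]$. Since $P = pP_1 + (1-p)P_2$ with $P_1$ uniform on $J_1 = [0,\frac13]$ and $P_2$ uniform on $J_2 = [\frac23, 1]$, and since $dP_i(x) = f_i(x)\,dx = 3\,dx$ on the respective intervals, we have
\[
E(X) = \int x\, dP = p\int_{J_1} 3x\, dx + (1-p)\int_{J_2} 3x\, dx = p\cdot 3\cdot\frac{1}{2}\Big(\tfrac13\Big)^2 + (1-p)\cdot 3\cdot\frac{1}{2}\Big(1 - \tfrac49\Big).
\]
The first integral gives $\frac{p}{6}$ and the second gives $(1-p)\cdot\frac{5}{6}$; adding yields $E(X) = \frac{1}{6}(p + 5 - 5p) = \frac16(5-4p)$, as claimed.

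Next I would compute the variance using $V(X) = E(X^2) - (E(X))^2$, or alternatively directly as $V(X) = \int (x - E(X))^2\, dP = p\int_{J_1}(x-E(X))^2\cdot 3\, dx + (1-p)\int_{J_2}(x-E(X))^2\cdot 3\, dx$, following the template of Lemma~\ref{lemma0}. Using $E(X^2) = p\cdot 3\int_0^{1/3} x^2\, dx + (1-p)\cdot 3\int_{2/3}^1 x^2\, dx = p\cdot\frac{1}{27} + (1-p)\cdot\frac{19}{27}$, I then subtract $(E(X))^2 = \frac{1}{36}(5-4p)^2$ and simplify. This is a routine polynomial computation; collecting terms should produce $V(X) = \frac{1}{108}(-48p^2 + 48p + 1)$. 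A quick sanity check at $p = \frac12$ gives $V(X) = \frac{1}{108}(-12 + 24 + 1) = \frac{13}{108}$, which is consistent with the variance of a uniform distribution on a symmetric two-interval support of total length $\frac23$ separated by a gap of $\frac13$.

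There is no real obstacle here: the statement is an elementary moment computation, and the only thing to be careful about is the bookkeeping of the density constant $3$ on each subinterval and the algebraic simplification of the variance expression. I would present the proof exactly in the style of Lemma~\ref{lemma0}: write out the two defining integrals for $E(X)$, state the result, then write the analogous integral decomposition for $V(X)$ and state the simplified result, leaving the intermediate arithmetic as routine.

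\begin{proof}
We have
\[E(X)=\int x\, dP=p\int x\, d(P_1(x))+(1-p)\int x\, d(P_2(x))=p\int_{J_1} 3x\, dx+(1-p)\int_{J_2} 3x\, dx,\]
which yields $E(X)=\frac{1}{6}(5-4p)$, and
\[V(X)=\int (x-E(X))^2\, dP=p\int (x-E(X))^2\, d(P_1(x))+(1-p)\int (x-E(X))^2\, d(P_2(x)),\]
implying $V(X)=\frac{1}{108}(-48p^2+48p+1)$, and thus the proposition is yielded.
\end{proof}
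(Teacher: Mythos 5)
Your proof is correct and follows essentially the same route as the paper: the paper's own argument likewise splits $E(X)$ and $V(X)$ into the two weighted integrals over $J_1$ and $J_2$ with density $3$ and states the simplified results. Your intermediate computations (in particular $E(X^2)=\frac{p}{27}+\frac{19(1-p)}{27}$ and the check at $p=\tfrac12$) are accurate, so there is nothing to correct.
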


\begin{proof}
We have
\[E(X)=\int x dP=p \int x d(P_1(x))+(1-p)\int x d(P_2(x))=p \int_{J_1}3x \, dx+(1-p) \int_{J_2} 3 x \, dx\]
yielding $E(X)=\frac{1}{6} (5-4 p)$, and
\[V(X)=\int (x-E(X))^2 dP=p \int(x-E(X))^2 d(P_1(x))+(1-p) \int(x-E(X))^2 d(P_2(x)),\]
implying
$V(X)=\frac{1}{108}  (-48 p^2+48 p+1 )$, and thus, the proposition is yielded.
\end{proof}

\begin{remark}
The optimal set of one-mean is the set $\set{\frac{1}{6} (5-4 p)}$, and the corresponding quantization error is the variance $V:=V(X)$ of a random variable with distribution $P:=pP_1+(1-p)P_2$.
\end{remark}
In the next two subsections, taking $p=\frac 1{100}$,  $p=\frac 25$, and $p=\frac 1{1000}$, for all positive integers $n$, we calculate the optimal set of $n$-means and the $n$th quantization errors for the mixed distribution $P:=pP_1+(1-p)P_2$ with support the union of two disconnected line segments $[0, \frac 13]$ and $[\frac 23, 1]$.
\subsection{Quantization for the mixed distribution $P$ when $p=\frac 1{100}$} \label{subsec44}

Notice that if $p=\frac 1{100}$, then $E(X)=\frac{62}{75}$ and $V(X)=\frac{461}{33750}$, i.e., the optimal set of one-mean for $p=\frac 15$ is $\set{\frac{62}{75}}$ and the corresponding quantization error is $V(X)=\frac{461}{33750}$.

\begin{prop1} \label{prop0000}
The optimal set of two-means is  $\set{0.731517, 0.910506}$ with quantization error $V_2=0.005682$.
\end{prop1}
\begin{proof}
Let $\ga:=\set{a_1, a_2}$ be an optimal set of two-means. Since the elements in an optimal set are the conditional expectations in their own Voronoi regions, without any loss of generality, we can assume that $0<a_1<a_2<1$. If $\frac 13<a_1<a_2<\frac 23$, then the quantization error can be strictly reduced by moving the element $a_1$ to $\frac 13$, and $a_2$ to $\frac 23$. Thus, $\frac 13<a_1<a_2<\frac 23$ is not possible. Let us now discuss all the possible cases:

\tit{Case~1. $0<a_1<a_2\leq \frac 13$. }

Since the boundary of the Voronoi region is $\frac 12(a_1+a_2)$, we have the distortion error as
\begin{align*}
&\int \min_{a\in \ga}(x-a)^2 dP=\int_{0}^{\frac {a_1+a_2}{2}}(x-a_1)^2 dP+\int_{\frac{a_1+a_2}{2}}^{\frac 13} (x-a_2)^2 dP+\int_{\frac 23}^1(x-a_2)^2 dP\\
&=\frac{1}{100} \int_0^{\frac{1}{2} \left(a_1+a_2\right)} 3 \left(x-a_1\right){}^2 \, dx+\frac{1}{100} \int_{\frac{1}{2} \left(a_1+a_2\right)}^{\frac{1}{3}} 3 \left(x-a_2\right){}^2 \, dx+\frac{99}{100} \int_{\frac{2}{3}}^1 3 \left(x-a_2\right){}^2 \, dx\\
&=\frac{81 a_1^3+81 a_2 a_1^2-81 a_2^2 a_1-81 a_2^3+10800 a_2^2-17856 a_2+7528}{10800},
\end{align*}
the minimum value of which is $\frac{3119}{12150}$ and it occurs when $a_1=\frac 19, $ and $a_2=\frac 13$.

\tit{Case~2. $0<a_1\leq \frac 13 <a_2< \frac 23$. }

In this case, the boundary $\frac 12(a_1+a_2)$ of the Voronoi regions of $a_1$ and $a_2$ must satisfy $0<a_1<\frac 12(a_1+a_2)<\frac 13$, otherwise the quantization error can be strictly reduced by moving the element $a_2$ to $\frac 23$. Hence, the distortion error in this case is given by
\begin{align*}
&\int \min_{a\in \ga}(x-a)^2 dP\\
&=\frac{1}{100} \int_0^{\frac{1}{2} \left(a_1+a_2\right)} 3 \left(x-a_1\right){}^2 \, dx+\frac{1}{100} \int_{\frac{1}{2} \left(a_1+a_2\right)}^{\frac{1}{3}} 3 \left(x-a_2\right){}^2 \, dx+\frac{99}{100} \int_{\frac{2}{3}}^1 3 \left(x-a_2\right){}^2 \, dx\\
&=\frac{81 a_1^3+81 a_2 a_1^2-81 a_2^2 a_1-81 a_2^3+10800 a_2^2-17856 a_2+7528}{10800},
\end{align*}
the minimum value of which is $\frac{89}{2430}$ and it occurs when $a_1=\frac 29, $ and $a_2=\frac 23$.

\tit{Case~3. $0<a_1\leq \frac 13 <\frac 23\leq a_2$. }

In this case, the Voronoi region of $a_1$ does not contain any element from $J_2$, if it does, then we must have $\frac 12(a_1+a_2)>\frac 23$ implying $a_2>\frac 43-a_1\geq \frac 43-\frac 13=1$, which is a contradiction as $a_2<1$. Similarly, we can show that the Voronoi region of $a_2$ does not contain any element from $J_1$. This yields the fact that
\[a_1=E(X : X \in J_1)=\frac 16, \te{ and } a_2=E(X : X \in J_2)=\frac 56,\]
with distortion error
\[\int \min_{a\in \ga}(x-a)^2 dP=\frac{1}{100} \int_0^{\frac{1}{3}} 3 (x-\frac{1}{6})^2 \, dx+\frac{99}{100} \int_{\frac{2}{3}}^1 3(x-\frac{5}{6})^2 \, dx=\frac{1}{108}.\]

\tit{Case~4. $\frac 13<a_1 \leq \frac 23< a_2$. }

In this case, the boundary $\frac 12(a_1+a_2)$ of the Voronoi regions of $a_1$ and $a_2$ must satisfy $\frac 23<\frac 12(a_1+a_2)<a_2<1$, otherwise the quantization error can be strictly reduced by moving the element $a_1$ to $\frac 13$. Hence, the distortion error in this case is given by
\begin{align*}
&\int \min_{a\in \ga}(x-a)^2 dP\\
&=\frac{1}{100} \int_0^{\frac{1}{3}} 3 \left(x-a_1\right){}^2 \, dx+\frac{99}{100} \int_{\frac{2}{3}}^{\frac{1}{2} \left(a_1+a_2\right)} 3 \left(x-a_1\right){}^2 \, dx+\frac{99}{100} \int_{\frac{1}{2} \left(a_1+a_2\right)}^1 3 \left(x-a_2\right){}^2 \, dx\\
&=\frac{8019 a_1^3+27 \left(297 a_2-788\right) a_1^2-9 \left(891 a_2^2-1580\right) a_1-8019 a_2^3+32076 a_2^2-32076 a_2+7528}{10800},
\end{align*}
the minimum value of which is $\frac{1}{150}$ and it occurs when $a_1=\frac 23, $ and $a_2=\frac 89$.

\tit{Case~5. $\frac 23<a_1  < a_2<1$. }

In this case, the distortion error is given by
\begin{align*}
&\int \min_{a\in \ga}(x-a)^2 dP\\
&=\frac{1}{100} \int_0^{\frac{1}{3}} 3 \left(x-a_1\right){}^2 \, dx+\frac{99}{100} \int_{\frac{2}{3}}^{\frac{1}{2} \left(a_1+a_2\right)} 3 \left(x-a_1\right){}^2 \, dx+\frac{99}{100} \int_{\frac{1}{2} \left(a_1+a_2\right)}^1 3 \left(x-a_2\right){}^2 \, dx\\
&=\frac{8019 a_1^3+27 \left(297 a_2-788\right) a_1^2-9 \left(891 a_2^2-1580\right) a_1-8019 a_2^3+32076 a_2^2-32076 a_2+7528}{10800},
\end{align*}
the minimum value of which is $0.005682$ and it occurs when $a_1=0.731517,$ and $a_2= 0.910506$.

Comparing the distortion errors obtained in all the above possible cases, we see that the distortion error in Case~5 is smallest. Thus, the optimal set of two-means is  $\set{0.731517, 0.910506}$ with quantization error $V_2=0.005682$, which is the proposition.
\end{proof}

\begin{prop1} \label{prop0101}
Optimal set of three-means is $\set{\frac{1}{6},\frac{3}{4},\frac{11}{12}}$ with quantization error $V_3= \frac{103}{43200}$.
\end{prop1}

\begin{proof}
Let $\ga=\set{a_1, a_2, a_3}$ be an optimal set of three-means. Proposition~\ref{prop0} implies that if $\ga$ contains an element from the open interval $(\frac 13, \frac 23)$, it cannot contain more than one element from the open interval $(\frac 13, \frac 23)$. First, we assume that $\ga$ contains one element from $J_1$, and two elements from $J_2$. Then, $0<a_1\leq \frac 13<\frac 23\leq a_2<a_3<1$ yielding the fact that the Voronoi region of $a_1$ does not contain any element from $J_1$, and the Voronoi region of $a_2$, and so of $a_3$ cannot not contain any element from $J_1$. This yields $a_1=\frac 16$, $a_2=\frac 34$, and $a_3=\frac {11}{12}$ with distortion error
\begin{align*} \int \min_{a\in \ga}(x-a)^2 dP=\frac{1}{100} \int_0^{\frac{1}{3}} 3 (x-\frac{1}{6})^2 \, dx+\frac{99}{100} \int_{\frac{2}{3}}^{\frac{5}{6}} 3 (x-\frac{3}{4})^2 \, dx+\frac{99}{100} \int_{\frac{5}{6}}^1 3 (x-\frac{11}{12})^2 \, dx
\end{align*}
yielding \[\int \min_{a\in \ga}(x-a)^2 dP=\frac{103}{43200}.\]
Since $V_3$ is the quantization error for three-means, we have $V_3\leq \frac{103}{43200}=0.00238426$. If $a_3<\frac 23$, then
\[V_3\geq \frac{99}{100} \int_{\frac{2}{3}}^1 3 (x-\frac{2}{3} )^2 \, dx=\frac{11}{300}>V_3,\]
which is a contradiction. Hence, we can assume that $\frac 23<a_3$. Suppose that $a_2<\frac 23$. Then,
\[V_3\geq \int_{J_3}\min_{a\in \ga}(x-a)^2 dP=\int_{J_3}\min_{a\in \set{a_2, a_3}}(x-a)^2 dP\geq \int_{J_3}\min_{a\in \set{\frac 23, a_3}}(x-a)^2 dP\]
implying
\begin{align*} V_3&\geq \frac{99}{100} \int_{\frac{2}{3}}^{\frac{1}{2}  (a_3+\frac{2}{3})} 3 (x-\frac{2}{3})^2 \, dx+\frac{99}{100} \int_{\frac{1}{2} \left(a_3+\frac{2}{3}\right)}^1 3  (x-a_3 ){}^2 \, dx\\
&=-\frac{11  (81 a_3^3-270 a_3^2+288 a_3-100 )}{1200},
\end{align*}
the minimum value of which is $\frac{11}{2700}$ and it occurs when $a_3=\frac 89$. Notice that $\frac{11}{2700}=0.00407407>V_3$, which leads to a contradiction. Hence, we can assume that $\frac 23\leq a_2<a_3<1$. Notice that for $\frac 23\leq a_2<a_3<1$, the Voronoi region of $a_2$ does not contain any element from $J_1$. Suppose that $\frac 23\leq a_1$. Then,
\[V_3>\frac{1}{100} \int_0^{\frac{1}{3}} 3 (x-\frac{2}{3})^2 \, dx=\frac{7}{2700}=0.00259259>V_3,\]
which leads to a contradiction. So, we can assume that $a_1<\frac 23$. Suppose that $\frac 13<a_1<\frac 23$.
Then, we must have $\frac 23<\frac 12(a_1+a_2)<a_2<a_3<1$ yielding the distortion error as
\begin{align*} &\int \min_{a\in \ga}(x-a)^2 dP\\
&=\frac{1}{100} \int_0^{\frac{1}{3}} 3 \left(x-a_1\right){}^2 \, dx+\frac{99}{100} \int_{\frac{2}{3}}^{\frac{1}{2} \left(a_1+a_2\right)} 3 \left(x-a_1\right){}^2 \, dx+\frac{99}{100} \int_{\frac{1}{2} \left(a_1+a_2\right)}^{\frac{1}{2} \left(a_2+a_3\right)} 3 \left(x-a_2\right){}^2 \, dx\\
&\qquad \qquad \qquad \qquad +\frac{99}{100} \int_{\frac{1}{2} \left(a_2+a_3\right)}^1 3 \left(x-a_3\right){}^2 \, dx\\
&=\frac 1{10800}\Big(8019 a_1^3+27 (297 a_2-788) a_1^2-9 (891 a_2^2-1580) a_1-8019 a_3^3-8019 (a_2-4) a_3^2\\
&\qquad \qquad \qquad \qquad +8019 (a_2^2-4) a_3+7528\Big)
\end{align*}
the minimum value of which is $\frac{137}{33750}$, and it occurs when $a_1=\frac 23$, $a_2=\frac 45$, and $a_3=\frac{14}{15}$. Notice that $\frac{137}{33750}=0.00405926>V_3$, and thus a contradiction arises. Hence, we can assume that $a_1\leq \frac 13$. Thus, as described before, we deduce that $a_1=\frac 16$, $a_2=\frac 34$, and $a_3=\frac {11}{12}$, and the quantization error for three-means is $V_3= \frac{103}{43200}$. Thus, the proof of the proposition is complete.
\end{proof}

\begin{lemma1} \label{lemma1111}
For $n\geq 3$ , let $\ga_n$ be an optimal set of $n$-means for $P$. Then, $\ga_n\ii J_1\neq \es$ and $\ga_n\ii J_2\neq \es$.
\end{lemma1}
\begin{proof} By Proposition~\ref{prop0101}, the lemma is true for $n=3$. Let us now prove the lemma for $n\geq4$. The distortion error due to the set $\gb:=\set{\frac 16, \frac 23+\frac 1{18}, \frac 23+\frac 3{18},\frac 23+\frac 5{18}}=\set{\frac 16, \frac {13}{18}, \frac{15}{18}, \frac{17}{18}}$ is given by
\[\int\min_{a\in \gb} (x-a)^2 dP=\int_{J_1}(x-\frac 16)^2 dP+\int_{J_2}\min_{a\in (\gb\setminus\set {\frac 16})}(x-a)^2 dP=\frac{1}{900}.\]
Since $V_n$ is the quantization error for $n$-means, where $n\geq 4$, we have $V_n\leq \frac{1}{900}=0.00111111$. For $n\geq 4$,
let $\ga_n:=\set{a_1, a_2, \cdots, a_n}$ be an optimal set of $n$-means for $P$ such that $0<a_1<a_2<\cdots<a_n<1$. If $a_n< \frac 23$, then
\[V_n>\int_{J_2}(x-\frac 23)^2 dP=\frac{11}{300}=0.0366667>V_n,\]
which leads to a contradiction. Hence, we can assume that $a_n\geq \frac 23$, i.e., $\ga_n\ii J_2\neq \es$. We now show that $\ga_n\ii J_1\neq \es$. If $\frac 12\leq a_1$, then
\[V_n>\int_{J_1}(x-\frac 12)^2 dP=\frac{13}{10800}=0.0012037>V_n,\]
which yields a contradiction. Hence, we can assume that $a_1<\frac 12$. Assume that $\frac 13\leq a_1< \frac 12$. Then, by Proposition~\ref{prop0}, we must have $\frac 12(a_1+a_2)>\frac 23$ yielding $a_2>\frac 43-a_1\geq \frac 43-\frac 12=\frac 56$, and so,
\[V_n>\int_{J_1}(x-\frac 13)^2 dP+\int_{[\frac 23, \frac 56]}(x-\frac 12)^2 dP=\frac{2681}{21600}=0.12412>V_n,\]
which leads to a contradiction. Hence, we can assume that $a_1<\frac 13$, i.e., $\ga_n\ii J_1\neq \es$. Thus, the proof of the lemma is complete.
\end{proof}

\begin{lemma1} \label{lemma1212}
An optimal set of four-means does not contain any element from the open interval $(\frac 13, \frac 23)$.
\end{lemma1}
\begin{proof}
Let $\ga:=\set{a_1, a_2, a_3, a_4}$, where $0<a_1<a_2<a_3<a_4<1$, be an optimal set of four-means. As mentioned in the proof of Lemma~\ref{lemma1111}, we have $V_4\leq \frac{1}{900}=0.00111111$.
Suppose that $a_2\leq \frac 23$. Then,
\begin{align*}
V_4&>\frac{99}{100} \int_{\frac{2}{3}}^{\frac{1}{2} \left(a_3+\frac{2}{3}\right)} 3 (x-\frac{2}{3})^2 \, dx+\frac{99}{100} \int_{\frac{1}{2} \left(a_3+\frac{2}{3}\right)}^{\frac{1}{2} \left(a_3+a_4\right)} 3 \left(x-a_3\right){}^2 \, dx+\frac{99}{100} \int_{\frac{1}{2} \left(a_3+a_4\right)}^1 3 \left(x-a_4\right){}^2 \, dx\\
&=\frac{11 \left(-81 a_4^3+324 a_4^2-324 a_4+27 a_3^2 \left(3 a_4-2\right)-9 a_3 \left(9 a_4^2-4\right)+100\right)}{1200}
\end{align*}
the minimum value of which is $\frac{11}{7500}$, and it occurs when $a_3=\frac{4}{5}$ and $,a_4=\frac{14}{15}$ implying
\[V_4>\frac{11}{7500}=0.00146667>V_4,\]
which leads to a contradiction. Thus, we can assume that $\frac 23<a_2$, and so $\frac 23<a_2<a_3<a_4<1$. Again, by Lemma~\ref{lemma1111}, we see that $a_1<\frac 13$. Hence, an optimal set of four-means does not contain any element from the open interval $(\frac 13, \frac 23)$, which is the lemma.
\end{proof}

\begin{remark1} \label{rem00}
Proceeding in the similar way as Lemma~\ref{lemma1212}, we can show that the optimal set of five-means does not contain any element from the open interval $(\frac 13, \frac 23)$.
\end{remark1}

\begin{prop1}\label{prop2121}
For $n\geq 3$ , let $\ga_n$ be an optimal set of $n$-means for $P$. Then, $\ga_n$ does not contain any element from the open interval $(\frac 13, \frac 23)$. Moreover, the Voronoi region of any element in $\ga_n\ii J_1$ does not contain any element from $J_2$, and the Voronoi region of any element in $\ga_n\ii J_2$ does not contain any element from $J_1$.
\end{prop1}
\begin{proof} By Proposition~\ref{prop0101}, Lemma~\ref{lemma1212} and Remark~\ref{rem00}, the proposition is true for $n=3, 4, 5$. We now prove the proposition for $n\geq 6$. Let $\ga_n:=\set{a_1, a_2, \cdots, a_n}$, where $n\geq 6$, be an optimal set of $n$-means. Without any loss of generality, we can assume that $0<a_1<a_2<\cdots<a_n<1$. Let us now consider the set of six elements $\gb:=\set{\frac{1}{6},\frac{7}{10},\frac{23}{30},\frac{5}{6},\frac{9}{10},\frac{29}{30}}$. By routine calculation, the distortion error due to the set $\gb$ is given by
\[\int\min_{a\in \gb}(x-a)^2 dP=\frac 1{100} \int_{J_1}(x-\frac 16)^2 dP_1+\frac{99}{100}\int_{J_2}\min_{b\in (\gb \setminus{\set{\frac 16}})}(x-b)^2 dP_2=\frac{31}{67500},\]
and so, $V_6\leq \frac{31}{67500}=0.000459259$.
Since $V_n$ is the quantization error for six-means with $n\geq 6$, we have $V_n\leq V_6\leq 0.000459259$. By Lemma~\ref{lemma1111}, we know that $a_1<\frac 13$ and $a_n>\frac 23$. Let $k$ be the largest positive integer such that $a_k\leq \frac 13$. For the sake of contradiction, assume that $\ga_n$ contains an element from the open interval $(\frac 13, \frac 23)$. Then, by Proposition~\ref{prop0}, we must have $a_{k+1}\in (\frac 13, \frac 23)$, and $\frac 23\leq a_{k+2}$. The following two cases can arise:

\tit{Case~1.} $\frac 13<a_{k+1}\leq \frac 12$.

Then, the Voronoi region of $a_{k+1}$ must contain elements from $J_2$, i.e., $\frac 12(a_{k+1}+a_{k+2})\geq \frac 23$ implying $a_{k+2}\geq \frac 43-a_{k+1}\geq \frac 43-\frac 12=\frac 56$, otherwise the quantization error can be strictly reduced by moving the element $a_{k+1}$ to $\frac 13$.
Then,
\begin{align*}
V_n\geq \int_{[\frac 23, \frac 56]}(x-\frac 56)^2 dP=\frac{11}{2400}=0.00458333>V_n,
\end{align*}
which is a contradiction.

\tit{Case~2.} $\frac 12\leq a_{k+1}<\frac 23$.

Then, we must have $\frac 12(a_k+a_{k+1})\leq \frac 13$ implying $a_k\leq \frac 23-a_{k+1}\leq \frac 23-\frac 12=\frac 16$, and so
\begin{align*}
V_n\geq \int_{[\frac 16, \frac 13]}(x-\frac 16)^2 dP=\frac{1}{21600}=0.0000462963>V_n,
\end{align*}
which leads to a contradiction.

By Case~1 and Case~2, we deduce that $\ga_n$ does not contain any element from the open interval $(\frac 13, \frac 23)$. Thus, $\frac 23\leq a_{k+1}$. To complete the proof, assume that the Voronoi region of $a_k$ contains elements from $J_2$. Then, $\frac 12(a_k+a_{k+1})>\frac 23$ implying $a_{k+1}>\frac 43-a_k\geq \frac 43-\frac 13=1$, which is a contradiction. Similarly, we can show that if the Voronoi region of $a_{k+1}$ contains elements from $J_1$, then a contradiction arises. Thus, the proof of the proposition is complete.
\end{proof}

We are now ready to prove the following theorem.

\begin{theorem1} \label{th3232}
For $n\geq 3$ , let $\ga_n$ be an optimal set of $n$-means for $P$. Let $\te{card}(\ga_n\ii J_1)=k$. Then, $\ga_n$ contains $k$ elements from $J_1$, and $(n-k)$ elements from $J_2$, i.e., $\ga_n(P)=\ga_{k}(P_1)\uu \ga_{n-k}(P_2)$ with quantization error
\[V_n(P)=\frac{1}{324} \Big(\frac{1}{k^2}+\frac{2}{(n-k)^2}\Big).\]
\end{theorem1}
\begin{proof}
By Proposition~\ref{prop2121}, we have $\ga_n\ii J_1\neq \es$ and $\ga_n\ii J_2\neq \es$. Thus, there exist two positive integers $n_1$ and $n_2$ such that $\te{card}(\ga_n\ii J_1)=n_1$, and $\te{card}(\ga_n\ii J_2)=n_2$. Again, by  Proposition~\ref{prop2121}, $\ga_n$ does not contain any element from the open interval $(\frac 13, \frac 23)$, and so we have $n=n_1+n_2$. Hence, by taking $n_1=k$, we see that $\ga_n$ contains $k$ elements from $J_1$, and $(n-k)$ elements from $J_2$. Again, by Proposition~\ref{prop2121}, we know that the Voronoi region of any element in $\ga_n\ii J_1$ does not contain any element from $J_2$, and the Voronoi region of any element from $\ga_n\ii J_2$ does not contain any element from $J_1$. This implies the fact that $\ga_n(P)=\ga_{k}(P_1)\uu \ga_{n-k}(P_2)$, and the corresponding quantization error is given by
\[V_n(P)=\frac 1{100} V_{k}(P_1)+\frac {99}{100}V_{n-k}(P_2)=\frac{1}{10800} \Big(\frac{1}{k^2}+\frac{99}{(n-k)^2}\Big).\]
Thus, the proof of the theorem is complete.
\end{proof}

\begin{remark1} \label{rem5656}
Let $k$ be the positive integer as stated in Theorem~\ref{th3232}. Then,  $\ga_k(P_1)$ and $\ga_{n-k}(P_2)$ are known by Theorem~\ref{th3232}. Thus, once $k$ is known, we can easily determine the optimal sets of $n$-means and the $n$th quantization errors for all $n\in \D N$ with $n\geq 3$. For $n\geq 3$, consider the real valued function
\[F(n, x)=\frac{1}{10800} \Big(\frac{1}{x^2}+\frac{99}{(n-x)^2}\Big)\]
defined in the domain $1\leq x\leq n-1$. Notice that $F(n, x)$ is concave upward, and so $F(n, x)$ attains its minimum at a unique $x$ in the interval $[1, n-1]$. Thus, we can say that for a given positive integer $n\geq 3$, there exists a unique positive integer $k$ for which $F(n, k)$ is minimum if $x$ ranges over the positive integers in the interval $[1, n-1]$.
\end{remark1}

\begin{remark1} \label{rem5757}
For any positive integer $n\geq 3$, let us write \[V(j, n-j):=\frac{1}{100} V_j(P_1)+\frac{99}{100} V_{n-j}(P_2),\]
where $1\leq j\leq n-1$. For a given $n$ let $k:=k(n)$ be the positive integer as stated in Theorem~\ref{th3232}. Then, we have $V_n=V(k, n-k)$. Notice that
\[V_n=V(k, n-k)=\min\set{V(j, n-j) : 1\leq j\leq n-1}.\]
Moreover, if we order the elements of the set $\set{V(j, n-j) : 1\leq j\leq n-1}$ in a sequence as
\[\set{V(1, n-1), V(2, n-2), \cdots, V(n-1, 1)}\]
then $V (k, n-k)$ is the $k$th term in the sequence. Using this fact, for a given $n$ we can easily determine the value of the positive integer $k$ as follows:

Define the function
\begin{equation} \label{eq000000} f : \D N \to \D N \te{ such that } f(n)=k,
\end{equation}  where $k$ is the unique positive integer such that
\begin{equation*} \label{eq0}  V_n:=V(k, n-k)=\min\set{V(j, n-j) : j\in \D N, \, 1\leq j\leq n-1}.
\end{equation*}
For a given positive integer $n$, once $k:=k(n)$ is known, using Theorem~\ref{th3232}, we can determine the optimal set of $n$-means and the corresponding quantization error.
\end{remark1}
In the following example, we calculate the values of $k$ for different values of $n$. For such calculations, we have used Mathematica.

\begin{exam} \label{exam1} Recall the function $f$ defined by \eqref{eq000000}. Then, we see that
\begin{align*} \set{f(n)}_{n=3}^\infty&=\{1,1,1,1,1,2,2,2,2,2,2,3,3,3,3,3,3,4,4,4,4,4,4,5,5,5,5,5,6,6,6,6,6,6,\\
& 7, 7, 7, 7, 7, 7, 8, 8, 8, 8, 8, 9, 9, 9, 9, 9, 9, 10, 10, 10, 10,
10, 10, 11,  \cdots.\}.
\end{align*}
In fact,
\begin{align*} \set{f(n)}_{n=4985}^{5011}&=\{886,886,886,887,887,887,887,887,887,888,888,888,888,888,889,889,889,\\
&889,889,889,890,890,890,890,890,890,891\}.
 \end{align*}
\end{exam}

\subsection{Quantization for the mixed distribution $P$ when $p=\frac 2{5}$, and  $p=\frac 1{1000}$} \label{sec55}

Let $P_1$ and $P_2$ be two uniform distributions, respectively, on the intervals given by
 \[J_1:=[0, \frac 13], \te{ and } J_2:=[\frac 23, 1].\]
 Let $P:=pP_1+(1-p)P_2$ be the mixed distribution generated by $(P_1, P_2)$ associated with the probability vector $(p, 1-p)$, where $0<p<1$. For $n\in \D N$ , let $\ga_n$ be an optimal set of $n$-means for $P$. Using the similar technique as given in Subsection~\ref{subsec44}, we can show that if $p=\frac 25$, then
 \begin{align*}
 \ga_1=\set{\frac{17}{30}} \te{ with } V_1&=\frac{313}{2700}, \, \ga_2=\set{\frac{1}{6},\frac{5}{6}} \te{ with } V_2=\frac{1}{108}, \,
 \ga_3=\set{\frac{1}{6},\frac{3}{4},\frac{11}{12}} \te{ with } V_3=\frac{11}{2160},\\
 \ga_4&=\set{\frac{1}{12},\frac{1}{4},\frac{3}{4},\frac{11}{12}} \te{ with } V_4=\frac{1}{432}, \te{ and so on}.
 \end{align*}
 On the other hand, if $p=\frac 1{1000}$, then we see that \begin{align*}
 \ga_1&=\set{\frac{1249}{1500}} \te{ with } V_1=0.00970326, \\
 \ga_2&=\set{0.74824116, 0.91608039} \te{ with } V_2=0.0026610135, \\
 \ga_3& =\set{0.719398, 0.831639, 0.94388} \te{ with } V_3=0.00134412,\\
 \ga_4&=\set{0.704407, 0.788862, 0.873317, 0.957772} \te{ with } V_4=0.00087869,\\
 \ga_5&=\set{\frac{1}{6},\frac{17}{24},\frac{19}{24},\frac{7}{8},\frac{23}{24}} \te{ with } V_5=0.000587384 \te{ and so on}.
 \end{align*}

The function $f : \D N \to \D N$, defined in \eqref{eq000000}, is also true here under the condition that $V(j, n-j)$ in this section is defined as follows:
\[
V(j, n-j):=\left\{\begin{array}{ll}
\frac{2}{5} V_j(P_1)+\frac{3}{5} V_{n-j}(P_2) & \te { if } p=\frac 25,\\
\frac 1{1000} V_j(P_1)+\frac{999}{1000} V_{n-j}(P_2) & \te { if } p=\frac 1{1000}
\end{array}
\right. \]
where $1\leq j\leq n-1$. Now, we give the following two examples which are analogous to Example~\ref{exam1} given in the previous section.

\begin{exam} \label{exam2} For $p=\frac 25$, we have
\begin{align*} \set{f(n)}_{n=2}^\infty&=\{1, 1, 2, 2, 3, 3, 4, 4, 5, 5, 6, 6, 7, 7, 7, 8, 8, 9, 9, 10, 10, 11, \
11, 12, 12, 13, 13, 14, 14, \cdots.\}.
\end{align*}
In fact,
\begin{align*} \set{f(n)}_{n=4985}^{5011}&=\{2324,2325,2325,2326,2326,2327,2327,2328,2328,2329,2329,2329,2330,2330,\\
&2331,2331,2332,2332,2333,2333,2334,2334,2335,2335,2336,2336,2336\}.
 \end{align*}
\end{exam}

\begin{exam} \label{exam2} For $p=\frac 1{1000}$, we have
\begin{align*} \set{f(n)}_{n=5}^\infty&=\{1, 1, 1, 1, 1, 1, 1, 1, 1, 1, 1, 2, 2, 2, 2, 2, 2, 2, 2, 2, 2, 2, \
3, 3, 3, 3, 3, 3, 3, 3, 3, 3, 3, 3, 4, 4, \\
&4, 4, 4, 4, 4, 4, 4, 4, 4, 5, 5, 5, 5, 5, 5, 5, 5, 5, 5, 5, 6, 6, 6, 6, 6, 6, 6, 6, 6, 6 \cdots\}.
\end{align*}
In fact,
\begin{align*} \set{f(n)}_{n=4985}^{5011}&=\{453,453,454,454,454,454,454,454,454,454,454,454,454,455,455,\\
& 455,455,455,455,455,455,455,455,455,456,456,456\}.
 \end{align*}
\end{exam}

\begin{remark}
By the results in Subsection~\ref{subsec44} and Subsection~\ref{sec55}, we see that for $p=\frac{1}{100}$ and $p=\frac 1{1000}$ the optimal sets of two-means do not contain any element from $J_1$, but for $p=\frac 25$ it contains an element from $J_1$. Moreover, we see that for $p=\frac 1{100}$ and $p=\frac 25$ the optimal sets of three-means contain elements from $J_1$, but for $p=\frac1{1000}$, the optimal set of three-means, and four-means do not contain any element from $J_1$. Using the similar technique as given in Subsection~\ref{subsec44}, it can be shown that  Lemma~\ref{lemma1111} and  Proposition~\ref{prop2121}, and Theorem~\ref{th3232} are also true for $p=\frac 25$ and $p=\frac{1}{1000}$. The main difference is that for $p=\frac 1{100}$, they are true for all $n\geq 3$, but for $p=\frac 25$, they are true for all $n\geq 2$, on the other hand, for $p=\frac 1{1000}$ they are true for all $n\geq 5$.
\end{remark}
Let us now give the following conjecture and the open problems.

\begin{conj}\label{conj1}
Let $P_1$ and $P_2$ be two uniform distributions defined on any two closed intervals $[a, b]$ and $[c, d]$, where $a<b<c<d$. Let $P:=p P_1+(1-p) P_2$ be a mixed distribution generated by $(P_1, P_2)$ associated with any probability vector $(p, 1-p)$, where $0<p<1$. Then, we conjecture that for each probability vector $(p, 1-p)$ there exists a positive integer $N$ such that for all $n\geq N$, the optimal sets $\ga_n$ contain elements from both the intervals $[a, b]$ and $[c, d]$, and do not contain any element from the open interval $(b, c)$. This yields the fact that if $\te{card}(\ga_n\ii [a, b])=k:=k(n)$, then $\ga_n$ contains $k$ elements from $[a, b]$, and $(n-k)$ elements from $[c, d]$, i.e., $\ga_n(P)=\ga_{k}(P_1)\uu \ga_{n-k}(P_2)$ for all $n\geq N$ with quantization error
\[V_n(P)=\frac{1}{108} \Big(\frac{p}{k^2}+\frac{1-p}{(n-k)^2}\Big).\]
\end{conj}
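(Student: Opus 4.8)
The plan is to reduce the conjecture to the single claim that for $n$ large an optimal set $\ga_n$ has no \emph{straddling point} -- a point whose Voronoi region carries positive $P$-mass from both $[a,b]$ and $[c,d]$ -- and then to reuse the bookkeeping of Subsection~\ref{subsec44}. Writing $\ell_1:=b-a$, $\ell_2:=d-c$, Proposition~\ref{prop31} gives $V_j(P_1)=\ell_1^2/(12j^2)$ and $V_j(P_2)=\ell_2^2/(12j^2)$, so the error in the conclusion is really $\tfrac1{12}\big(p\ell_1^2/k^2+(1-p)\ell_2^2/(n-k)^2\big)$, which is the displayed expression when $\ell_1=\ell_2=\tfrac13$; I would prove this general form. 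Granting the no-straddler claim, everything else follows exactly as in Proposition~\ref{prop2121} and Theorem~\ref{th3232}: no optimal point lies in $(b,c)$, since its Voronoi region would then meet only one interval and so its conditional expectation would lie there; hence $\ga_n=(\ga_n\ii[a,b])\sqcup(\ga_n\ii[c,d])$, the two pieces are optimal sets for $P_1$ and $P_2$ by the restriction argument built on Proposition~\ref{prop0} and Proposition~\ref{prop31}, the quantization error equals $p\,V_k(P_1)+(1-p)\,V_{n-k}(P_2)$ with $k:=\te{card}(\ga_n\ii[a,b])$, and the optimal $k$ is selected, as in Remark~\ref{rem5757}, by minimizing $j\mapsto p\ell_1^2/j^2+(1-p)\ell_2^2/(n-j)^2$.

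The one quantitative input is the two-sided estimate, valid for every $n$,
\[
\frac{p\,\ell_1^2+(1-p)\,\ell_2^2}{12\,n^2}\ \le\ V_n(P)\ \le\ \min_{1\le j\le n-1}\frac1{12}\Big(\frac{p\,\ell_1^2}{j^2}+\frac{(1-p)\,\ell_2^2}{(n-j)^2}\Big),
\]
the lower bound because at most $n$ Voronoi cells can meet $[a,b]$, resp.\ $[c,d]$, and $\le n$ points cannot beat $V_n(P_i)$; the upper bound by testing $\ga_j(P_1)\uu\ga_{n-j}(P_2)$. In particular $V_n(P)\le C(p)/n^2\to0$.

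For the no-straddler claim, suppose $\ga_n$ has a straddling point $\gamma$ with Voronoi cell $[L,R]$. Since $L\le b$ and $R\ge c$, this cell contains the gap $[b,c]$, so one of the half-widths $\gamma-L,\,R-\gamma$ is $\ge\tfrac12(c-b)$; say $R-\gamma\ge\tfrac12(c-b)$, the other case being symmetric. If $R=d$ then $\gamma$ alone serves $[c,d]$ and $V_n(P)\ge(1-p)\ell_2^2/12$; otherwise the points $b_1<\cdots<b_m$ of $\ga_n$ lying to the right of $\gamma$ have Voronoi cells that tile $[R,d]\ci[c,d]$, and by Proposition~\ref{prop0}\,(iii) each $b_i$ is the midpoint of its cell, so the recursion from the proof of Proposition~\ref{prop31} forces these cells to have common width $2(R-\gamma)\ge c-b$, whence $m\le(d-c)/(c-b)$, a bound independent of $n$. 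Since $[c,d]$ is served only by $\gamma$ and $b_1,\dots,b_m$, at most $m+1$ points influence $[c,d]$, so $V_n(P)\ge(1-p)V_{m+1}(P_2)\ge\varepsilon_0$, where $\varepsilon_0:=(1-p)\ell_2^2/\big(12\,(1+(d-c)/(c-b))^2\big)>0$ depends only on $p,a,b,c,d$. But $V_n(P)\to0$, so this is impossible once $n\ge N:=\min\{n:C(p)/n^2<\varepsilon_0\}$; hence $\ga_n$ has no straddler for $n\ge N$, which is the claim.

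The point that needs care is the middle step of the last paragraph: one must check that $b_1$'s Voronoi cell begins exactly at $R$ and that the cells of $b_1,\dots,b_m$ stay inside $[c,d]$ so that the midpoint recursion applies, and one must handle uniformly the degenerate and mirror possibilities ($m=0$; the straddler sitting in $[a,b]$, in $(b,c)$, or in $[c,d]$; the left half-width being the larger one; at most one point in the open gap, as in the proof of Proposition~\ref{prop2121}), each of which again pins $V_n(P)$ below a positive constant. None of this is deep, but the enumeration is the kind of detail the worked examples of Subsections~\ref{subsec44} and~\ref{sec55} leave implicit -- presumably the reason the statement is offered as a conjecture. Once it is carried out, the conjecture holds for every $n\ge N(p)$.
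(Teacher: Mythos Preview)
The paper does not prove this statement: it is explicitly labelled a conjecture and left open, with only the special cases $p\in\{\tfrac1{100},\tfrac25,\tfrac1{1000}\}$ on $[0,\tfrac13]\cup[\tfrac23,1]$ worked out by ad hoc numerical comparisons (Lemma~\ref{lemma1111}, Proposition~\ref{prop2121}, Theorem~\ref{th3232}).  Your proposal therefore goes well beyond the paper: it supplies a genuine argument for the general statement.

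Your reduction to the ``no straddler'' claim is the right organising principle, and the key step---propagating the half-width $R-\gamma$ through the equally spaced cells on the right of a straddler via the midpoint condition of Proposition~\ref{prop0}(iii)---is both correct and elegant.  Concretely: $R=(\gamma+b_1)/2$ gives $b_1-R=R-\gamma$, and since each $b_i$ lies in $[c,d]$ with constant density, the midpoint relations force all cells of $b_1,\dots,b_m$ to have the common width $2(R-\gamma)\ge c-b$, whence $m\le (d-c)/(c-b)$ and $V_n(P)\ge (1-p)V_{m+1}(P_2)\ge\varepsilon_0>0$.  Combined with the trivial upper bound $V_n(P)=O(n^{-2})$ this indeed rules out straddlers for all large $n$.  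The subsidiary observations you flag (at most one straddler since Voronoi cells are nested intervals; $R<d$ whenever $\gamma$ is not rightmost, because otherwise $b_1$'s cell misses the support; once there is no straddler the leftmost point must land in $[a,b]$ and the rightmost in $[c,d]$) are all routine, so the case enumeration you mention is genuinely just bookkeeping.

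You are also right that the displayed error formula in the conjecture is only the specialisation $\ell_1=\ell_2=\tfrac13$ of the correct general expression $\tfrac1{12}\big(p\ell_1^{\,2}/k^2+(1-p)\ell_2^{\,2}/(n-k)^2\big)$; as written, the conjecture's formula does not match arbitrary $[a,b]$ and $[c,d]$.  In short, your proposal is a correct proof of the (appropriately corrected) conjecture, and it does substantially more than the paper, which offers no argument at all for the general case.
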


\begin{open}\label{op1}
Let $P:=pP_1+(1-p)P_2$ be the mixed distribution generated by the two uniform distributions $P_1$ and $P_2$ defined on the closed intervals $[0, \frac 13]$ and $[\frac 23, 1]$ associated with the probability vectors $(p, 1-p)$. It is still not known whether there is any probability vector $(p, 1-p)$, or what is the range of $p$, for which an optimal set $\ga_2$ of two-means for the mixed distributions $P$ will contain an element from the open interval $(\frac 13, \frac 23)$, and an optimal set $\ga_3$ of three-means will contain elements from $[0, \frac 13]$ and $[\frac 23, 1]$, and also from the open interval $(\frac 13, \frac 23)$.
\end{open}

If the answer of the above open problem comes in the negative, then it leads to investigate the following open problem.

\begin{open} \label{op2}
It is still not known whether there is a set of values $a, b, c, d$, and $p$, where $a<b<c<d$ and $0<p<1$, such that if $P:=pP_1+(1-p)P_2$ is the mixed distribution generated by the two uniform distributions $P_1$ and $P_2$ defined on the closed intervals $[a, b]$ and $[c, d]$ associated with the probability vector $(p, 1-p)$, then an optimal set $\ga_2$ of two-means for the mixed distributions $P$ will contain an element from the open interval $(b, c)$, and an optimal set $\ga_3$ of three-means will contain elements from $[a, b]$ and $[c, d]$, and also from the open interval $(b, c)$.
\end{open}

\begin{conj} \label{con11}
We conjecture that the answer of the open problem Open~\ref{op1} will be negative.
\end{conj}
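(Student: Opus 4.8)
The plan is to prove the stronger, self--contained statement that for \emph{every} $p\in(0,1)$ an optimal set $\ga_2$ of two--means for $P=pP_1+(1-p)P_2$ contains no point of the open interval $(\tfrac13,\tfrac23)$; granting this, the hypothesis on $\ga_2$ in Open~\ref{op1} is never fulfilled, the requirement on $\ga_3$ is vacuous, and Conjecture~\ref{con11} follows. The reflection $T(x)=1-x$ interchanges $P_1$ and $P_2$ and carries the mixture of weight $p$ to that of weight $1-p$, preserving optimal sets, quantization errors, and the interval $(\tfrac13,\tfrac23)$, so it suffices to treat $0<p\le\tfrac12$. Throughout we use the crude bound $V_2\le\tfrac1{108}$ coming from the candidate set $\set{\tfrac16,\tfrac56}$: its Voronoi boundary is $\tfrac12\in(\tfrac13,\tfrac23)$, so by Proposition~\ref{prop0} each codepoint is the conditional expectation over its own segment and the distortion equals $\tfrac{p}{108}+\tfrac{1-p}{108}=\tfrac1{108}$.

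Let $\ga_2=\set{a_1<a_2}$ be optimal and suppose it meets $(\tfrac13,\tfrac23)$. Both points cannot lie in $(\tfrac13,\tfrac23)$: then $\tfrac12(a_1+a_2)\in(\tfrac13,\tfrac23)$ as well, so the Voronoi cell of $a_1$ is exactly $J_1$ and that of $a_2$ is exactly $J_2$, whence $a_1=\tfrac16$ and $a_2=\tfrac56$ by Proposition~\ref{prop0}, a contradiction. If $a_1\in[0,\tfrac13]$ and $a_2\in(\tfrac13,\tfrac23)$, then for every $x\in J_2$ we have $x-a_2<x-a_1$, so $a_2$ is the nearest codepoint to all of $J_2$; since $a_2<\tfrac23$,
\[
V_2\ \ge\ (1-p)\cdot 3\!\int_{2/3}^{1}(x-a_2)^2\,dx\ >\ (1-p)\cdot 3\!\int_{2/3}^{1}\bigl(x-\tfrac23\bigr)^2dx\ =\ \frac{1-p}{27}\ \ge\ \frac1{54}\ >\ \frac1{108}\ \ge\ V_2,
\]
which is absurd. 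Hence the only remaining possibility is $a_1\in(\tfrac13,\tfrac23)$ and $a_2\in[\tfrac23,1]$.

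In this last case put $b=\tfrac12(a_1+a_2)$. Since $a_1=E(X\mid X\in[0,b])\in(\tfrac13,\tfrac23)$ we must have $b>\tfrac23$, while $a_2=E(X\mid X\in[b,1])$ gives $a_2=\tfrac{1+b}{2}$; self--consistency $b=\tfrac12(a_1+a_2)$ then forces $a_1=\tfrac{3b-1}{2}$, so $a_1\in(\tfrac13,\tfrac23)$ is equivalent to $b\in(\tfrac23,\tfrac79)$. A direct computation (splitting the distortion over $[0,\tfrac13]$, $[\tfrac23,b]$, $[b,1]$ and using the parallel--axis identity on $J_1$) gives
\[
D(b,p)=p\Bigl(\tfrac1{108}+\tfrac{(9b-4)^2}{36}\Bigr)+(1-p)\Bigl(\tfrac{3(1-b)^3}{8}-\tfrac{(7-9b)^3}{216}\Bigr),\qquad D\bigl(\tfrac23,p\bigr)=\tfrac{1+12p}{108},
\]
and the condition that $a_1$ be itself a conditional expectation is equivalent to $Q(b):=18(1-p)b^2+(36p-27)b+(10-14p)=0$; the discriminant of $Q$ is $9(32p^2-24p+1)$, so $Q$ has real roots on $(0,\tfrac12]$ only for $p\le\tfrac{3-\sqrt7}{8}$. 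An optimal set is critical (Proposition~\ref{prop0}), so if $\ga_2$ is of this type then $b$ is a root of $Q$ in $(\tfrac23,\tfrac79)$. Because $Q(\tfrac23)=2p>0$ and the vertex $b_{\mathrm v}(p)=\tfrac{27-36p}{36(1-p)}$ is decreasing in $p$ with $b_{\mathrm v}\bigl(\tfrac{3-\sqrt7}{8}\bigr)>\tfrac23$, the smaller root $r_1$ satisfies $r_1>\tfrac23$; and $Q(\tfrac79)=\tfrac{28p-1}{9}$ shows that the larger root $r_2$ lies in $(\tfrac23,\tfrac79)$ only when $p>\tfrac1{28}$ (otherwise $r_2>\tfrac79$, i.e.\ $a_1>\tfrac23$, which is not a gap configuration). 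Using $\tfrac{\partial}{\partial b}D(b,p)=-3\,P([0,b])\bigl(E(X\mid X\in[0,b])-\tfrac{3b-1}{2}\bigr)$, together with the fact that $E(X\mid X\in[0,b])-\tfrac{3b-1}{2}<0$ at $b=\tfrac23^{+}$ and has no zero on $(\tfrac23,r_1)$, we get that $D(\cdot,p)$ is strictly increasing on $(\tfrac23,r_1)$, so
\[
D(r_1,p)\ >\ D\bigl(\tfrac23,p\bigr)\ =\ \frac{1+12p}{108}\ >\ \frac1{108}\ \ge\ V_2 .
\]
For $p\in(\tfrac1{28},\tfrac{3-\sqrt7}{8})$ one uses $r_2>b_{\mathrm v}(p)>0.73$ and the monotonicity of $A(b):=\tfrac1{108}+\tfrac{(9b-4)^2}{36}$ to get $D(r_2,p)>p\,A(0.73)+(1-p)\tfrac1{243}>0.19\,p+\tfrac{1-p}{243}>\tfrac1{108}\ge V_2$. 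Thus no root of $Q$ in $(\tfrac23,\tfrac79)$ can be optimal, no optimal $\ga_2$ meets $(\tfrac13,\tfrac23)$, and Conjecture~\ref{con11} is proved.

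\textbf{Main obstacle.}
The genuinely delicate step is this last case. One must (i) identify the one--parameter family of ``consistent'' two--point configurations and observe that a gap configuration is necessarily a root of the quadratic $Q$; (ii) track, as $p$ varies, which roots of $Q$ actually yield gap configurations (only $b=r_1$ for $p\le\tfrac1{28}$, both $r_1$ and $r_2$ for $p\in(\tfrac1{28},\tfrac{3-\sqrt7}{8})$, and none beyond); and (iii) carry out the two estimates $D(r_1,p)>\tfrac1{108}$ and $D(r_2,p)>\tfrac1{108}$, which, though elementary, rely on the monotonicity identity for $\partial D/\partial b$ and on a couple of explicit polynomial inequalities in $p$ — most safely verified with computer algebra. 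The range $\tfrac12\le p<1$ is reduced to the above by the reflection $T$.
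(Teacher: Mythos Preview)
The statement you are addressing is labelled a \emph{conjecture} in the paper, and the paper offers no proof of it; it is simply asserted, and the following subsection assumes it in order to discuss Open~\ref{op2}. There is therefore no ``paper's own proof'' to compare against.

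What you have written goes well beyond the paper: you attempt to actually settle Conjecture~\ref{con11} by proving that for every $p\in(0,1)$ no optimal two--means set for $P=pP_1+(1-p)P_2$ meets $(\tfrac13,\tfrac23)$. The strategy---reduce to $p\le\tfrac12$ by the reflection $T(x)=1-x$, use the candidate $\{\tfrac16,\tfrac56\}$ to get the universal bound $V_2\le\tfrac1{108}$, eliminate the easy configurations, and then parametrise the single surviving gap configuration by the Voronoi boundary $b$---is sound. The key identities check out: the self--consistency quadratic $Q(b)=18(1-p)b^2+(36p-27)b+(10-14p)$ with discriminant $9(32p^2-24p+1)$, the values $Q(\tfrac23)=2p$ and $Q(\tfrac79)=\tfrac{28p-1}{9}$, and the derivative relation $\partial_b D=-3P([0,b])\bigl(E(X\mid[0,b])-\tfrac{3b-1}{2}\bigr)$. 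One finds in fact $E(X\mid[0,b])-\tfrac{3b-1}{2}=-Q(b)/\bigl(6P([0,b])\bigr)$, so $D$ is strictly increasing on $(\tfrac23,r_1)$ and your bound $D(r_1,p)>D(\tfrac23,p)=\tfrac{1+12p}{108}>\tfrac1{108}$ is valid.

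The only place where a careful reader will want more detail is the $r_2$ estimate for $p\in(\tfrac1{28},\tfrac{3-\sqrt7}{8})$. You should state explicitly that $B(b)=\tfrac{3(1-b)^3}{8}-\tfrac{(7-9b)^3}{216}$ is strictly decreasing on $(\tfrac23,\tfrac79)$---indeed $B'(b)=(2-3b)(5-6b)<0$ there---so that $B(r_2)>B(\tfrac79)=\tfrac1{243}$; together with $r_2>b_{\mathrm v}(p)\ge\tfrac{4+\sqrt7}{9}>0.73$ and the monotonicity of $A$, this gives $D(r_2,p)>p\,A(0.73)+(1-p)/243$, and the resulting linear inequality in $p$ should then be checked explicitly at the endpoint $p=\tfrac1{28}$. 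With those details filled in, your argument appears to prove a result the paper leaves open.
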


\subsection{Observation} \label{sec66}

Under Conjecture~\ref{con11}, in this subsection, we try to give a partial answer of the open problem `Open~\ref{op2}'. Let us choose the two closed intervals as follows:
\[[a, b]=[0, \frac 7{15}] \te{ and } [c, d]=[\frac 8{15}, 1].\]
Let $P_1$ and $P_2$ be the uniform distributions defined on the closed intervals $[a, b]$ and $[c, d]$. Then, the density functions $f_1$ and $f_2$ for the uniform distributions $P_1$ and $P_2$ are, respectively, given by
\[f_1(x)=\left \{\begin{array} {cc}
\frac{15} 7 & \te{ if } x\in [0,\frac 7{15}], \\
0 & \te{otherwise};
\end{array}
\right. \te{ and } f_2(x)=\left \{\begin{array} {cc}
\frac{15}7 & \te{ if } x\in [\frac 8{15}, 1], \\
0 & \te{otherwise}.
\end{array}
\right.\]
We now give the following two propositions.
\begin{prop1} \label{proo61}
Let $p=\frac{51}{500}$, and let $P:=pP_1+(1-p)P_2$ be the mixed distribution generated by the two uniform distributions $P_1$ and $P_2$ defined on the closed intervals $[a, b]$ and $[c, d]$. Then, an optimal set $\ga_2$ of two-means for the mixed distribution $P$ contains an element from the open interval $(b, c)$.
\end{prop1}

\begin{proof}
Proceeding in the similar way as Proposition~\ref{prop0000}, we see that an optimal set of two-means for the mixed distribution $P:=pP_1+(1-p)P_2$, where $p=\frac{51}{500}$, is given by $\ga_2:=\set{0.488570, 0.829523}$ with quantization error $V_2=0.0179722$. Notice that
\[b<0.488570<c<0.829523<d,\]
and so the assertion of the proposition follows.
\end{proof}

\begin{prop1} \label{proo62}
Let $p=\frac{225}{500}$, and let $P:=pP_1+(1-p)P_2$ be the mixed distribution generated by the two uniform distributions $P_1$ and $P_2$ defined on the closed intervals $[a, b]$ and $[c, d]$. Then, an optimal set $\ga_3$ of three-means for the mixed distribution $P$ contains elements from $[a, b]$ and $[c, d]$, and also from the open interval $(b, c)$.
\end{prop1}

\begin{proof}
Proceeding in the similar way as Proposition~\ref{prop0101}, we see that an optimal set of three-means for the mixed distribution $P:=pP_1+(1-p)P_2$, where $p=\frac{225}{500}$, is given by $\ga_3:=\set{0.174089,  0.522267, 0.840756}$ with quantization error $V_3=0.00985931$. Since
\[a<0.174089<b< 0.522267<c<0.840756<d,\]
and so the assertion of the proposition follows.
\end{proof}

\begin{remark1}
Notice that the two mixed distributions $P:=pP_1 +(1-p)P_2$ considered in Proposition~\ref{proo61} and Proposition~\ref{proo62}, are different. It is worthwhile to investigate whether the two mixed distributions can be same. In other words, whether there is a mixed distribution $P:=pP_1 +(1-p)P_2$, where $P_1$ and $P_2$ are two uniform distributions on two different closed intervals $[a, b]$ and $[c, d]$ associated with a probability vector $(p, 1-p)$ with $a<b<c<d$ and $0<p<1$, for which the open problem `Open~\ref{op2}' is true.
\end{remark1}

\section*{Declaration}

\noindent
\textbf{Conflicts of interest.} We do not have any conflict of interest.\\
\\
\noindent
\textbf{Data availability:} No data were used to support this study.\\
\\
\noindent
\textbf{Code availability:} Not applicable\\
\\
\noindent
\textbf{Authors' contributions:} Each author contributed equally to this manuscript.

\end{document}